\mathchardef\ogon="012C%
\newcommand{\as}{a\kern-0.22em\lower.40ex\hbox{$_{\ogon}$}}
\newcommand{\As}{A\kern-0.22em\lower.40ex\hbox{$_{\ogon}$}}
\newcommand{\es}{e\kern-0.24em\lower.40ex\hbox{$_{\ogon}$}}
\newcommand{\Es}{E\kern-0.22em\lower.40ex\hbox{$_{\ogon}$}}
\newtheorem{theorem}{Theorem}[section]
\newtheorem{corollary}[theorem]{Corollary}
\newtheorem{definition}[theorem]{Definition}
\newtheorem{example}[theorem]{Example}
\newtheorem{proposition}[theorem]{Proposition}
\newtheorem{remark}[theorem]{Remark}
\newenvironment{proof}[1][Proof]{\noindent\textbf{#1.} }{\ \rule{0.5em}{0.5em}}
\def\qed{\hbox to 0pt{}\hfill$\rlap{$\sqcap$}\sqcup$}
\numberwithin{equation}{section}
\date{}
\title{On minimax theorems for lower semicontinuous functions in Hilbert spaces }
\author{Ewa M. Bednarczuk
\thanks{System Research Institute, Polish Academy of Sciences, Newelska 6, 01--447, Warsaw, Poland,  Ewa.Bednarczuk@ibspan.waw.pl
} \thanks{Warsaw University of Technology, Faculty of Mathematics and Information Science, ul. Koszykowa 75,
00--662 Warsaw, Poland, E.Bednarczuk@mini.pw.edu.pl}
, Monika Syga
\thanks{Warsaw University of Technology, Faculty of Mathematics and Information Science, ul. Koszykowa 75,
 00--662 Warsaw, Poland,  M.Syga@mini.pw.edu.pl} 
 }
\begin{document}
\maketitle

\begin{abstract}
We prove minimax theorems for lower semicontinuous functions defined on a Hilbert space.
 The main tool is the theory of $\Phi$-convex functions and 
sufficient and necessary conditions for the minimax equality  for  $\Phi$-convex  functions.
These conditions are expressed in terms of abstract  $\Phi$-subgradients.

\medskip{}

\textbf{Mathematics Subject Classification (2000): 32F17, 49J52, 49K27, 49K35, 52A01}
 \medskip{}

\textbf{Keywords:} abstract convexity, minimax theorems, intersection property, abstract  $\Phi$-subdifferential, abstract  $\Phi$-subgradient
\end{abstract}
\section{Introduction}
Let $X$  and  $Y$ be nonempty sets and let $a:X\times Y\rightarrow\hat{\mathbb{R}}:=\mathbb{R}\cup\{\pm\infty\}$,  be
an extended  real-valued function defined on $X\times Y$. Minimax theorems provide conditions for the minimax equality
$$
\sup_{y\in Y} \inf_{x\in X} a(x,y)=\inf_{x\in X} \sup_{y\in Y} a(x,y).
$$
to hold for $a$. An exhaustive survey of minimax theorems is given e.g. in \cite{simons}.

In this paper we prove minimax theorems for lower semicontinuous functions 
defined on a Hilbert space and bounded
from below by a quadratic function.
The novelty of the results presented is  that we do not exploit any compactness and/or connectedness assumptions.
Our approach is based on minimax theorems for abstract convex functions 
obtained in \cite{bed-syg, phdsyga, syga, sygaconv}.

We start with definitions related to abstract convexity. 
Let $X$ be a set. Let $\Phi$ be a set of real-valued functions $\varphi:X\rightarrow \mathbb{R}$. 

For any $f,g:X\rightarrow\hat{\mathbb{R}}$ 
$$
f\le g\ \Leftrightarrow\  f(x)\le g(x)\ \ \forall x\in X.
$$
Let $f:X\rightarrow\hat{\mathbb{R}}$. 
The set
$$
\text{supp}(f,\Phi):=\{\varphi\in \Phi\ :\ \varphi\le f\}
$$
is called the {\em support} of $f$ with  respect to $\Phi$.
We will use the notation $\text{supp}(f)$ 
if the class $\Phi$ is clear from the context. 
\begin{definition}(\cite{dolecki-k, rolewicz, rubbook})
	\label{convf}
	A function $f:X\rightarrow
	\hat{\mathbb{R}}$ is called {\em $\Phi$-convex} if
	$$
	f(x)=\sup\{\varphi(x)\ :\ \varphi\in\textnormal{supp}(f)\}\ \ \forall\ x\in X.
	$$
\end{definition}
 By convention, $\textnormal{supp}(f)=\emptyset$ if and only if $f\equiv-\infty$.
	In this paper we always assume that $\textnormal{supp}(f)\neq\emptyset$, i.e. we limit our attention to functions $f:X\rightarrow\bar{\mathbb{R}}:=\mathbb{R}\cup\{+\infty\}$. 
	
	We say that a function $f:X\rightarrow\bar{\mathbb{R}}$ is proper if the effective domain of $f$ is nonempty, i.e.
	$$
	\text{dom}(f):=\{x\in X \ : \ f(x)<+\infty \}\neq \emptyset.
	$$


If, for every $y\in Y$  the function $a(\cdot,y)$ is $\Phi$-convex then, a sufficient and  necessary
condition for $a$ to satisfy the minimax equality is the so called intersection property
introduced in \cite{bed-syg} and investigated in \cite{phdsyga, syga, sygaconv}. Let $\varphi_1,\varphi_2:X\rightarrow \mathbb{R}$ be any functions from the set $\Phi_{lsc}$ defined below and $\alpha\in \mathbb{R}$. We say that 
{\em the intersection property} holds for $\varphi_1$ and $\varphi_2$ on $X$ at the level $\alpha$ if and only if
$$
[\varphi_1<\alpha]\cap[\varphi_2<\alpha]=\emptyset,
$$
where $[\varphi<\alpha]:=\{x\in X\ :\ \varphi(x)<\alpha\}$ is the strict lower level set 
of function $\varphi:X\rightarrow \mathbb{R}$.

From now on we consider classes $\Phi$ which are closed under vertical shift, i.e. $\varphi+c\in\Phi$ for any $\varphi\in\Phi$ and $c\in\mathbb{R}$. 
The following result is proved in \cite{phdsyga, sygaconv}.
\begin{theorem}(Theorem 1.2 of \cite{sygaconv}, see also Theorem 3.3.3 of \cite{phdsyga}).
	\label{min-max}
	Let $X$ be a nonempty set and $Y$ be a real vector space and let $a:X\times Y\rightarrow\bar{\mathbb{R}}$. Assume that for any $y\in Y$ the  function $a(\cdot,y):X\rightarrow\bar{\mathbb{R}}$ 
	is proper $\Phi$-convex on $X$ and for any $x\in X$ the function $a(x,\cdot):Y\rightarrow\bar{\mathbb{R}}$  is concave on $Y$. The following conditions are equivalent:
	\begin{description}
		\item [{\em (i)}] for every $\alpha\in\mathbb{R}$, $\alpha < \inf\limits_{x\in X} \sup\limits_{y\in Y} a(x,y)$, there exist $y_{1}, y_{2}\in Y$ and $\varphi_{1}\in \textnormal{supp } a(\cdot, y_{1})$, $\varphi_{2}\in \textnormal{supp } a(\cdot, y_{2})$ such that the intersection property holds for $\varphi_{1}$ and $\varphi_{2}$ on $X$ at the level $\alpha$,
		\item [{\em (ii)}] $\sup\limits_{y\in Y} \inf\limits_{x\in X} a(x,y)=\inf\limits_{x\in X} \sup\limits_{y\in Y} a(x,y).$
	\end{description}
\end{theorem}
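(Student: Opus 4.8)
Throughout I would write $p:=\sup_{y\in Y}\inf_{x\in X}a(x,y)$ and $q:=\inf_{x\in X}\sup_{y\in Y}a(x,y)$, so that weak duality gives $p\le q$ unconditionally and condition (ii) is exactly the reverse inequality $q\le p$. Since $p=\sup_{y}g(y)$ with $g(y):=\inf_{x}a(x,y)$, to get $q\le p$ it suffices to show $p\ge\alpha$ for every real $\alpha<q$, and proving $p\ge\alpha$ amounts to exhibiting a single ``good dual point'' $\bar y\in Y$ with $a(\cdot,\bar y)\ge\alpha$ on all of $X$. The whole argument is then organized around passing between such a $\bar y$ and the two-function intersection property appearing in (i).

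For the direction (ii)$\Rightarrow$(i) I would fix $\alpha<q=p$. Because $\alpha<p=\sup_{y}\inf_{x}a(x,y)$, I can choose $\bar y$ with $\inf_{x}a(x,\bar y)>\alpha$, so that $a(x,\bar y)>\alpha$ for all $x$ and hence $[a(\cdot,\bar y)<\alpha]=\emptyset$. By $\Phi$-convexity, $a(x,\bar y)=\sup\{\varphi(x):\varphi\in\textnormal{supp}\,a(\cdot,\bar y)\}>\alpha$ for each $x$, which says precisely that the family $\{[\varphi<\alpha]:\varphi\in\textnormal{supp}\,a(\cdot,\bar y)\}$ has empty common intersection. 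Setting $y_{1}=y_{2}=\bar y$, it then remains to extract \emph{two} members $\varphi_{1},\varphi_{2}$ whose strict $\alpha$-sublevel sets are already disjoint; this finite-selection step, together with the vertical-shift closure of $\Phi$ used to normalize the supports, is where the structural assumptions on the class $\Phi$ enter, and it yields the intersection property for $\varphi_{1},\varphi_{2}$ at level $\alpha$.

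The substantial direction is (i)$\Rightarrow$(ii). Here I would fix $\alpha<q$ and take $y_{1},y_{2}$ and $\varphi_{1}\in\textnormal{supp}\,a(\cdot,y_{1})$, $\varphi_{2}\in\textnormal{supp}\,a(\cdot,y_{2})$ with $[\varphi_{1}<\alpha]\cap[\varphi_{2}<\alpha]=\emptyset$, i.e. $\max(\varphi_{1},\varphi_{2})\ge\alpha$ on $X$; since $\varphi_{i}\le a(\cdot,y_{i})$ this forces $\max(a(\cdot,y_{1}),a(\cdot,y_{2}))\ge\alpha$ as well. The plan is to exploit that $Y$ is a vector space and $a(x,\cdot)$ is concave: along the segment $y_{t}=(1-t)y_{1}+t\,y_{2}$ one has $a(x,y_{t})\ge(1-t)a(x,y_{1})+t\,a(x,y_{2})$, and each $a(\cdot,y_{t})$ is again $\Phi$-convex. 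The aim is to produce some $t$ with $\inf_{x}a(x,y_{t})\ge\alpha$, giving $p\ge\alpha$. I expect the main obstacle to be exactly this passage from the pointwise ``$\max\ge\alpha$'' to a uniform bound at one parameter value: at a point $x$ where only $a(\cdot,y_{1})$ exceeds $\alpha$ while $a(\cdot,y_{2})$ lies far below, the convex combination can dip under $\alpha$, so no single $t$ works from the affine lower bound alone. The heart of the matter is therefore a selection argument on $[0,1]$ that replaces the compactness/connectedness used in Sion-type theorems by the intersection property itself: using that the superlevel sets of the concave maps $t\mapsto a(x,y_{t})$ are subintervals and propagating disjointness of the $\alpha$-sublevel sets through the $\Phi$-convex structure, one rules out that the ``bad'' sets $[a(\cdot,y_{t})<\alpha]$ persist for all $t$. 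Once this produces a $\bar y$ with $a(\cdot,\bar y)\ge\alpha$, so that $p\ge\alpha$, letting $\alpha\uparrow q$ gives $p\ge q$ and hence the minimax equality (ii).
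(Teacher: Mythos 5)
This theorem is imported by the paper from \cite{sygaconv} and \cite{phdsyga} without proof, so there is no in-paper argument to compare against; judged on its own, your proposal sets up the right skeleton (weak duality, reduction to showing $\sup_y\inf_x a\ge\alpha$ for each $\alpha<\inf_x\sup_y a$, and the use of the segment $y_t=(1-t)y_1+t\,y_2$ together with concavity in $y$), but it leaves genuine gaps at precisely the two places where the theorem has content. In (ii)$\Rightarrow$(i) you pass from ``the sets $[\varphi<\alpha]$, $\varphi\in\textnormal{supp}\,a(\cdot,\bar y)$, have empty common intersection'' to ``two of them are disjoint'' and defer this to an unspecified ``finite-selection step''. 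That implication is false as a set-theoretic principle (three sets can meet pairwise while having empty triple intersection), and closure of $\Phi$ under vertical shifts does not rescue it: on a three-point $X$ one can take $\Phi=\{\varphi_n+c\}$ with $\varphi_n(n)=1$ and $\varphi_n(m)=-1$ for $m\neq n$; then $f\equiv 1$ is proper $\Phi$-convex, yet every two members of $\textnormal{supp}(f)$ have overlapping strict $0$-sublevel sets. What makes this direction work for the classes actually used later ($\Phi_{conv}$, $\Phi_{lsc}$) is that constants belong to $\Phi$, so $\bar\varphi\equiv\alpha$ is itself a support function of $a(\cdot,\bar y)$ with $[\bar\varphi<\alpha]=\emptyset$ --- exactly the device used in the proof of Theorem \ref{min-max-lsc-m2}. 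You need to state and verify which structural property of $\Phi$ you are invoking; empty common intersection alone is not enough.

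The gap in (i)$\Rightarrow$(ii) is more serious because it is the entire substance of the theorem. You correctly observe that $\max(\varphi_1,\varphi_2)\ge\alpha$ does not produce a single $t$ with $\inf_x a(x,y_t)\ge\alpha$ from the affine lower bound, and you then announce ``a selection argument on $[0,1]$'' that ``rules out that the bad sets $[a(\cdot,y_t)<\alpha]$ persist for all $t$'' --- but no such argument is given. This is not a routine verification: if one classifies each bad point $x$ (i.e.\ one with $a(x,y_t)<\alpha$) by which of the disjoint sets $[\varphi_1<\alpha]$, $[\varphi_2<\alpha]$ it lies in, concavity of $t\mapsto a(x,y_t)$ shows that type-1 witnesses propagate to all smaller $t$ and type-2 witnesses to all larger $t$, and this alone yields no contradiction --- a priori every $t$ can still own a witness. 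The analysis at the crossing parameter, which is where the intersection property is really consumed, is exactly what is missing. As written, the proposal accurately locates the obstacles but establishes neither implication.
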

Condition $(i)$ of Theorem \ref{min-max} requires  the existence of two functions from support sets $\textnormal{supp } a(\cdot, y_{1})$ and $\textnormal{supp } a(\cdot, y_{2})$ for which the intersection property holds on $X$ at the level $\alpha < \inf\limits_{x\in X} \sup\limits_{y\in Y} a(x,y)$. A natural question is whether it is possible to choose these two functions as $\Phi$-subgradients of $a(\cdot, y_{1})$ and $a(\cdot, y_{2})$ respectively, at some points. 

In the present paper we investigate the possibility of expressing condition $(i)$ of Theorem \ref{min-max} with the help of $\Phi$-subgradients. We consider $\Phi$-convex functions $a(\cdot, y)$ for two classes $\Phi$. The first class is defined as
$$
\Phi_{conv}:=\{\varphi : X \rightarrow \mathbb{R},\ \varphi(x)= \left\langle \ell,x\right\rangle+c, \ \ x\in X,\  \ell\in X^{*}, \ c\in \mathbb{R}\}, 
$$
where $X$ is a vector topological space, $X^{*}$ is a topological dual space to $X$. It is a well known fact
(see for example Proposition 3.1 of \cite{ek-tem}) that a proper convex lower semicontinuous function $f:X\rightarrow \bar{\mathbb{R}}$ is  $\Phi_{conv}$-convex.

The second class which is considered in this paper is defined in the following way
$$
\Phi_{lsc}:= \{\varphi : X \rightarrow \mathbb{R}, \ \varphi(x)=-a\|x\|^2+ \left\langle \ell,x\right\rangle+c, \ \ x\in X,\  \ell\in X^{*}, \ a\geq 0, \ c\in \mathbb{R} \},
$$ 
where $X$ is a normed space.

 In the following theorem we provide a further characterization of $\Phi_{lsc}$-convex functions.

\begin{theorem}(\cite{rubbook}, Example 6.2)
	\label{rub-lsc-1}
	Let $X$ be a Hilbert space.
	Let $f: X\rightarrow  \bar{\mathbb{R}}$ be lower semicontinuous on $X$. If there exists $\bar{\varphi}\in \Phi_{lsc}$ such that $\bar{\varphi}<f$, then $f$ is $\Phi_{lsc}-$convex. 
\end{theorem}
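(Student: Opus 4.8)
The plan is to verify Definition \ref{convf} directly. Since every $\varphi\in\textnormal{supp}(f)$ satisfies $\varphi\le f$, the inequality $\sup\{\varphi(x_0)\ :\ \varphi\in\textnormal{supp}(f)\}\le f(x_0)$ is automatic at each $x_0\in X$, so the entire content is the reverse inequality. It therefore suffices to show that for every $x_0\in X$ and every $\beta\in\mathbb{R}$ with $\beta<f(x_0)$ there exists $\varphi\in\Phi_{lsc}$ with $\varphi\le f$ on $X$ and $\varphi(x_0)>\beta$; letting $\beta\uparrow f(x_0)$ then yields $\sup\{\varphi(x_0)\ :\ \varphi\in\textnormal{supp}(f)\}\ge f(x_0)$, which is what we want.

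To build such a $\varphi$ I would pass to the ``apex form'' of the elements of $\Phi_{lsc}$: completing the square shows that, for $a>0$, the function $-a\|x\|^2+\langle\ell,x\rangle+c$ can be rewritten as $-a\|x-x_c\|^2+d$ for a suitable centre $x_c\in X$ and height $d\in\mathbb{R}$, and conversely every such downward paraboloid belongs to $\Phi_{lsc}$ (with $\ell=2a\,x_c$ and $c=d-a\|x_c\|^2$). Fixing $x_0$ and $\beta<f(x_0)$, I choose a height $d$ with $\beta<d<f(x_0)$ and seek $\varphi(x)=-a\|x-x_0\|^2+d$, with the curvature $a\ge 0$ still to be determined. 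Then $\varphi(x_0)=d>\beta$ holds automatically, so only two inequalities remain: $\varphi\le f$ near $x_0$ and $\varphi\le f$ far from $x_0$. The near part is immediate from lower semicontinuity alone: since $d<f(x_0)$, the open set $[f>d]$ contains $x_0$, hence a ball $B(x_0,r)$, and on this ball $\varphi(x)\le d<f(x)$ for \emph{any} $a\ge 0$, because the quadratic term is nonpositive.

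The main obstacle is the estimate on $\{\|x-x_0\|\ge r\}$, and this is exactly where the hypothesis $\bar{\varphi}<f$ with $\bar{\varphi}\in\Phi_{lsc}$ is used. Writing $\bar{\varphi}(x)=-\bar{a}\|x\|^2+\langle\bar{\ell},x\rangle+\bar{c}$ and changing variables $u=x-x_0$, the desired chain $\varphi\le\bar{\varphi}\le f$ reduces to $(a-\bar{a})\|u\|^2+\langle v,u\rangle+C\ge 0$ for $\|u\|\ge r$, where $v\in X^{*}$ and $C\in\mathbb{R}$ depend only on $\bar{a},\bar{\ell},\bar{c},x_0,d$. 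Bounding $\langle v,u\rangle\ge-\|v\|\,\|u\|$ by Cauchy--Schwarz turns this into the one-variable inequality $(a-\bar{a})s^2-\|v\|\,s+C\ge 0$ for $s\ge r$, and I expect the crux of the argument to be the routine but careful verification that, by taking $a$ large enough (in particular $a>\bar{a}$), the minimiser of this upward parabola is pushed below $r$ while its value at $s=r$ is driven to $+\infty$, so the inequality holds on all of $[r,\infty)$. Fixing such an $a$, the function $\varphi$ then minorizes $\bar{\varphi}$, hence $f$, off the ball; combined with the near estimate this gives $\varphi\in\textnormal{supp}(f)$ with $\varphi(x_0)>\beta$, and the proof is complete.
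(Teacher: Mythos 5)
Your argument is correct, but note that the paper does not prove Theorem \ref{rub-lsc-1} at all: it is quoted verbatim from Rubinov's book (Example 6.2 of \cite{rubbook}), so there is no in-paper proof to compare against. What you have written is a complete, self-contained proof of the cited fact, and it is essentially the standard one: reduce to showing that for each $x_0$ and each $\beta<f(x_0)$ some member of $\textnormal{supp}(f)$ exceeds $\beta$ at $x_0$; take a downward paraboloid $\varphi(x)=-a\|x-x_0\|^2+d$ with $\beta<d<f(x_0)$; use lower semicontinuity to handle a ball $B(x_0,r)$ where the quadratic term is harmless; and use the minorant $\bar{\varphi}$ to control the complement by making $a$ large. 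The one step you flag as ``routine but careful'' does go through: with $v=\bar{\ell}-2\bar{a}x_0$ and $C=\bar{\varphi}(x_0)-d$, the inequality $(a-\bar{a})s^2-\|v\|s+C\ge 0$ for all $s\ge r$ holds as soon as $a\ge\bar{a}+\|v\|/r+|C|/r^2$, since dividing by $s^2$ gives $(a-\bar{a})-\|v\|/s+C/s^2\ge(a-\bar{a})-\|v\|/r-|C|/r^2$. Two small remarks: the Hilbert-space structure is what lets you rewrite $2a\langle x_0,\cdot\rangle$ as an element of $X^{*}$ so that your recentered paraboloid really lies in $\Phi_{lsc}$, and your proof only uses $\bar{\varphi}\le f$ rather than the strict inequality in the hypothesis, so it establishes a marginally stronger statement.
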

 Let us note that when $X=\mathbb{R}^{n}$ the class of $\Phi_{lsc}$-convex  functions contains 
 the class of prox-bounded functions $f$ (Definition 1.23 in \cite{west-rock}),  which  possess proximal subgradients (Definition 8.45 of \cite{west-rock}) at each $x\in \mathbb{R}^{n}$ (if $\text{dom}f=\mathbb{R}^{n}$)
 (Proposition 8.46f \cite{west-rock}). See also \cite{ber-thi}.

The organisation of the paper is as follows. In section 2 we collect basic facts on $\Phi$-subdifferentials of $\Phi$-convex functions. In section 3 we provide characterizations of the intersection property in terms of $\Phi$-$\varepsilon$-subdifferentials. In section 4 we discuss our main tool which is the Br\o{}nsted-Rockafellar-type theorem for $\Phi_{lsc}$-convex functions. In section 5 we discuss
the intersection property in the class $\Phi_{conv}$. Section 6 contains  our main results.

\section{$\Phi$-subdifferentials }
Let $X$ be a set. The $\Phi$-subdifferential of a  function $f:X\rightarrow \bar{\mathbb{R}}$ at a point $\bar{x}\in \text{dom}(f)$ is defined as follows
$$
\partial_{\Phi}f(\bar{x}):=\{\varphi\in \Phi \ :\ \varphi(x)-\varphi(\bar{x})\leq f(x)-f(\bar{x}), \ \forall \ x\in X \}.
$$
The elements $\varphi$ of $\partial_{\Phi}f(\bar{x})$ are called $\Phi$-subgradients of $f$ at $\bar{x}$. This definition and some results related to $\Phi$-subdifferentials can be found e.g. in \cite{rolewicz} and \cite{sharikov}.

Let $\varepsilon >0$. The $\Phi$-$\varepsilon$-subdifferential  of $f$ at the point $\bar{x}\in\text{dom}(f)$ is defined as follows
$$
\partial_{\Phi}^{\varepsilon}f(\bar{x}):=\{\varphi\in \Phi \ :\ \varphi(x)-\varphi(\bar{x})-\varepsilon\leq f(x)-f(\bar{x}), \ \forall \ x\in X \}.
$$
Whenever the class $\Phi$ is clear from the context we use the notation $\varepsilon$-subdifferential. The $\Phi$-$\varepsilon$-subdifferential  was defined in \cite{ioffe-rub}, which is a direct adaptation of the definition from \cite{bro-rock}. The elements $\varphi$ of $\partial^{\varepsilon}_{\Phi}f(\bar{x})$ are called 
$\Phi$-$\varepsilon$-subgradients ($\varepsilon$-subgradients) of $f$ at $\bar{x}$.

The following proposition states a necessary and sufficient condition for the nonemptiness of $\Phi$-subdifferential and $\Phi$-$\varepsilon$-subdifferential of  $f$ at a given point $\bar{x}$. Similar results can be found in \cite{rubbook} (Proposition 1.2,  Corollary 1.2) and \cite{rubbook} (Proposition 5.1).
\begin{proposition}
	\label{prop_subdiff}
	Let $f:X\rightarrow\bar{\mathbb{R}}$ be a proper $\Phi$-convex functions, $\varepsilon\geq 0$ and $\bar{x}\in \textnormal{dom}(f)$. Then
	\begin{equation}
	\label{eq-subdiff}
	\partial^{\varepsilon}_{\Phi}f(\bar{x})=\{\varphi\in\textnormal{supp}(f)\ :\ f(\bar{x})=\varphi(\bar{x})+\varepsilon\}
	\end{equation}
\end{proposition}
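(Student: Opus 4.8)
The plan is to prove the set equality \eqref{eq-subdiff} by double inclusion, working directly from the inequality defining $\partial^{\varepsilon}_{\Phi}f(\bar{x})$ and rewriting it in the equivalent form $\varphi(x)-f(x)\le \varphi(\bar{x})-f(\bar{x})+\varepsilon$ for all $x\in X$, so that the support condition $\varphi\le f$ and the level condition $f(\bar{x})=\varphi(\bar{x})+\varepsilon$ can be read off separately. Throughout I use that $\bar{x}\in\textnormal{dom}(f)$, so $f(\bar{x})$ is finite and the rearrangements involve no $\pm\infty$ ambiguity, and that properness gives $\textnormal{supp}(f)\neq\emptyset$.

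First I would establish the inclusion $\supseteq$. Let $\varphi\in\textnormal{supp}(f)$ satisfy $f(\bar{x})=\varphi(\bar{x})+\varepsilon$, i.e. $\varphi(\bar{x})=f(\bar{x})-\varepsilon$. Then for every $x\in X$ the minorization $\varphi\le f$ gives $\varphi(x)-\varphi(\bar{x})-\varepsilon=\varphi(x)-f(\bar{x})\le f(x)-f(\bar{x})$, which is precisely the defining inequality of the $\varepsilon$-subdifferential. Hence $\varphi\in\partial^{\varepsilon}_{\Phi}f(\bar{x})$, and this direction needs nothing beyond $\varphi\le f$ and finiteness at $\bar{x}$.

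For the reverse inclusion $\subseteq$ I would take $\varphi\in\partial^{\varepsilon}_{\Phi}f(\bar{x})$ and try to recover both $\varphi\in\textnormal{supp}(f)$ and $f(\bar{x})=\varphi(\bar{x})+\varepsilon$ from $\sup_{x}[\varphi(x)-f(x)]\le\varphi(\bar{x})-f(\bar{x})+\varepsilon$. Here the $\Phi$-convexity of $f$ is the essential ingredient: writing $f(\bar{x})=\sup\{\psi(\bar{x})\ :\ \psi\in\textnormal{supp}(f)\}$ lets me compare $\varphi$ with genuine minorants of $f$ near $\bar{x}$, bound $\varphi(\bar{x})-f(\bar{x})$ from below, and so attempt to force the minorization $\varphi\le f$ together with the exact gap $\varepsilon$ at $\bar{x}$.

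The step I expect to be the main obstacle is exactly this reverse inclusion, and the difficulty is structural. The inequality defining $\partial^{\varepsilon}_{\Phi}f(\bar{x})$ involves only the increments $\varphi(x)-\varphi(\bar{x})$ and is therefore unchanged under vertical shifts $\varphi\mapsto\varphi+c$, whereas the right-hand side of \eqref{eq-subdiff} fixes the vertical position of $\varphi$ through $\varphi\le f$ and $f(\bar{x})=\varphi(\bar{x})+\varepsilon$. Reconciling these two features is the crux: the natural device is the normalization $\varphi\mapsto\varphi+(f(\bar{x})-\varphi(\bar{x})-\varepsilon)$, which is admissible because $\Phi$ is closed under vertical shift and which carries any element satisfying the increment inequality to a function lying below $f$ and meeting $f(\bar{x})-\varepsilon$ at $\bar{x}$. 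I would therefore invoke $\Phi$-convexity precisely to justify that this normalized translate is the representative intended on the right-hand side, treating $\varepsilon=0$ as the base case and carrying the additive constant $\varepsilon$ unchanged through the general $\varepsilon\ge 0$ computation.
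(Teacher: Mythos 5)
Your proposal takes essentially the same route as the paper: the inclusion $\supseteq$ is proved exactly as you describe, and for the converse the paper uses precisely your normalization $\bar{\varphi}:=\varphi-\varphi(\bar{x})+f(\bar{x})-\varepsilon$, reading $\bar{\varphi}\le f$ off the subdifferential inequality and $\bar{\varphi}(\bar{x})+\varepsilon=f(\bar{x})$ by construction. The obstacle you flag is real and is not removed in the paper either: the left-hand side of \eqref{eq-subdiff} is invariant under vertical shifts $\varphi\mapsto\varphi+c$ while the right-hand side is not, so the literal inclusion $\subseteq$ fails whenever the $\varepsilon$-subdifferential is nonempty, and what both you and the paper actually establish is only that the shift-normalized representative lies in the right-hand set (which is all the later applications use). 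One small correction: $\Phi$-convexity of $f$ is not the ingredient that makes the normalization admissible --- only the standing assumption that $\Phi$ is closed under vertical shifts, together with finiteness of $f(\bar{x})$, is needed.
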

\begin{proof}
	 Let $\varphi\in \partial^{\varepsilon}_{\Phi}f(\bar{x})$. By the definition,
	$$
	f(x)\geq \varphi(x)-\varphi(\bar{x}) + f(\bar{x})- \varepsilon.
	$$
	Let $\bar{\varphi}:= \varphi-\varphi(\bar{x}) + f(\bar{x}) - \varepsilon$, then  $\bar{\varphi}\in\text{supp}(f)$ and $\bar{\varphi}(\bar{x}) + \varepsilon=f(\bar{x})$.
	
	 Let $\bar{\varphi}\in\text{supp}(f) $ be such that $f(\bar{x})=\bar{\varphi}(\bar{x})+\varepsilon$. By the fact that $\bar{\varphi}\in\text{supp}(f) $ we have $f(x)\geq\bar{\varphi}(x) $, for all $x\in X$. Hence
	the following inequality holds
	$$
	\bar{\varphi}(x)-\bar{\varphi}(\bar{x})-  \varepsilon\leq f(x)-f(\bar{x}), \ \forall \ x\in X,
	$$
	which means $\bar{\varphi}\in \partial^{\varepsilon}_{\Phi}f(\bar{x})$.
\end{proof}

\section{The intersection property via $\Phi$-$\varepsilon$-subgradients }

 Since $[\bar{\varphi}<\alpha]\subset [\varphi<\alpha] $ whenever $\varphi\leq\bar{\varphi}$, we have the following proposition.
 \begin{proposition}(\cite{phdsyga}, Proposition 2.1.3)
 	\label{prop-przeciecie}
 	Let $\alpha\in \mathbb{R}$ and $\varphi_{1},\varphi_{2}, \bar{\varphi_{1}}, \bar{\varphi_{2}}:X\rightarrow \mathbb{R}$ be such that 
 	$$
 	\bar{\varphi_{1}}\geq \varphi_{1} \ \ \text{and} \ \ \ 
 	\bar{\varphi_{2}}\geq \varphi_{2}.
 	$$
 	If $\varphi_{1}$ and $\varphi_{2}$ have the  intersection property on $X$ at the level $\alpha$, then $\bar{\varphi_{1}}$ and $\bar{\varphi_{2}}$ have the intersection property on $X$ at the level $\alpha$.
 \end{proposition}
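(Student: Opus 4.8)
The plan is to reduce the statement entirely to the containment of strict lower level sets already recorded in the sentence preceding the proposition. The intersection property is, by definition, the disjointness of two strict sublevel sets, so it suffices to observe that replacing $\varphi_1,\varphi_2$ by the larger functions $\bar{\varphi_{1}},\bar{\varphi_{2}}$ can only shrink these sets; the conclusion then follows by elementary set theory.

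First I would isolate the basic monotonicity of the level-set operator: if $\psi\le\bar\psi$ pointwise on $X$, then $[\bar\psi<\alpha]\subseteq[\psi<\alpha]$. This is immediate, since for any $x$ with $\bar\psi(x)<\alpha$ one has $\psi(x)\le\bar\psi(x)<\alpha$, so that $x\in[\psi<\alpha]$. Applying this observation with the pair $(\psi,\bar\psi)=(\varphi_1,\bar{\varphi_{1}})$ and again with $(\psi,\bar\psi)=(\varphi_2,\bar{\varphi_{2}})$ yields the two inclusions $[\bar{\varphi_{1}}<\alpha]\subseteq[\varphi_1<\alpha]$ and $[\bar{\varphi_{2}}<\alpha]\subseteq[\varphi_2<\alpha]$.

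Next I would intersect these two inclusions to obtain $[\bar{\varphi_{1}}<\alpha]\cap[\bar{\varphi_{2}}<\alpha]\subseteq[\varphi_1<\alpha]\cap[\varphi_2<\alpha]$. By hypothesis the right-hand side is empty, because $\varphi_1$ and $\varphi_2$ enjoy the intersection property on $X$ at the level $\alpha$. Consequently the left-hand side is empty as well, which is exactly the assertion that $\bar{\varphi_{1}}$ and $\bar{\varphi_{2}}$ have the intersection property on $X$ at the level $\alpha$.

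I do not anticipate any genuine obstacle: the argument is pure monotonicity of the sublevel-set operator with respect to the pointwise order on functions. The only point requiring a little care is the direction of the inclusion, namely that \emph{raising} a function \emph{shrinks} its strict sublevel set, so that disjointness is preserved (and in fact can only be reinforced) rather than lost.
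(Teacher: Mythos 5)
Your argument is correct and is exactly the one the paper intends: the sentence immediately preceding the proposition records the key inclusion $[\bar{\varphi}<\alpha]\subset[\varphi<\alpha]$ whenever $\varphi\le\bar{\varphi}$, and the proposition is stated as an immediate consequence. You have simply written out in full the two applications of this monotonicity and the intersection step that the paper leaves implicit.
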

 
	For $\varepsilon$-subdifferentials the following theorem holds.

\begin{theorem}
	\label{th-epsilon}
	Let $X$ be a set.
	Let $\Phi$ be a class of functions $\varphi:X\rightarrow\mathbb{R}$.
	Let $f,g:X\rightarrow \bar{\mathbb{R}}$ be proper $\Phi$-convex functions and $\alpha\in\mathbb{R}$.
	
	The following conditions are equivalent:
	\begin{itemize}
	 \item[{\em (i)}]  there exist $\varphi_1\in\textnormal{supp}(f)$ and $\varphi_2\in\textnormal{supp}(g)$ such that $\varphi_1,\varphi_2$ have the  intersection property on $X$ at the level $\alpha$,
	 
	\item[{\em (ii)}] for any $\varepsilon>0$ there exist $x_{1} \in \textnormal{dom}(f)$, $x_{2}\in \textnormal{dom}(g)$ and $\bar{\varphi_1}\in\partial_{\Phi}^{\varepsilon}f(x_{1})$ and $\bar{\varphi_2}\in\partial_{\Phi}^{\varepsilon}g(x_{2})$ such that $\bar{\varphi_{1}},\bar{\varphi_{2}}$ have the intersection property on $X$ at the level $\alpha$.
	\end{itemize}
\end{theorem}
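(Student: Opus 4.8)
The plan is to establish the two implications separately, using Proposition \ref{prop_subdiff}, which identifies $\Phi$-$\varepsilon$-subgradients with support functions, together with the vertical-shift closure of $\Phi$ and the monotonicity of the intersection property from Proposition \ref{prop-przeciecie}. The implication (ii) $\Rightarrow$ (i) should be immediate: I apply (ii) with any fixed $\varepsilon > 0$ to obtain $\bar\varphi_1 \in \partial^\varepsilon_\Phi f(x_1)$ and $\bar\varphi_2 \in \partial^\varepsilon_\Phi g(x_2)$ having the intersection property at $\alpha$; by Proposition \ref{prop_subdiff} these belong to $\textnormal{supp}(f)$ and $\textnormal{supp}(g)$ respectively, so $\varphi_1 := \bar\varphi_1$ and $\varphi_2 := \bar\varphi_2$ already witness (i).

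For (i) $\Rightarrow$ (ii) is where the work lies, and I would argue as follows. Let $\varphi_1 \in \textnormal{supp}(f)$ and $\varphi_2 \in \textnormal{supp}(g)$ have the intersection property at $\alpha$. Put $s_1 := \inf_{x\in X}(f(x) - \varphi_1(x))$, which is finite and nonnegative since $\varphi_1 \le f$ and $f$ is proper, and define $s_2$ analogously. Using that $\Phi$ is closed under vertical shift, I replace $\varphi_1,\varphi_2$ by $\psi_1 := \varphi_1 + s_1 \in \Phi$ and $\psi_2 := \varphi_2 + s_2 \in \Phi$; then $\psi_1 \le f$ and $\psi_2 \le g$, so $\psi_1 \in \textnormal{supp}(f)$, $\psi_2 \in \textnormal{supp}(g)$, while $\inf_x(f-\psi_1) = 0$ and $\inf_x(g-\psi_2) = 0$. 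Since $\psi_1 \ge \varphi_1$ and $\psi_2 \ge \varphi_2$, Proposition \ref{prop-przeciecie} guarantees that $\psi_1,\psi_2$ still have the intersection property at $\alpha$. Now fix $\varepsilon > 0$. By the definition of the infimum there exist $x_1 \in \textnormal{dom}(f)$ and $x_2 \in \textnormal{dom}(g)$ with $f(x_1) - \psi_1(x_1) < \varepsilon$ and $g(x_2) - \psi_2(x_2) < \varepsilon$. From $\psi_1 \le f$ and $\psi_1(x_1) + \varepsilon > f(x_1)$ I obtain, for every $x \in X$, the chain $\psi_1(x) - \psi_1(x_1) - \varepsilon \le f(x) - \psi_1(x_1) - \varepsilon < f(x) - f(x_1)$, which is exactly the defining inequality for $\psi_1 \in \partial^\varepsilon_\Phi f(x_1)$; the same computation gives $\psi_2 \in \partial^\varepsilon_\Phi g(x_2)$. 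Taking $\bar\varphi_1 := \psi_1$ and $\bar\varphi_2 := \psi_2$ then yields (ii).

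The step I expect to require the most care is this forward direction, because a support function is an $\varepsilon$-subgradient only at points where its value lies within $\varepsilon$ of the function, not at arbitrary points. The normalisation $\psi_i = \varphi_i + s_i$ is precisely what makes the argument go through: after the shift the infimal gap equals $0$, so "near-infimal" points are exactly the points where the gap drops below $\varepsilon$, and such points exist for every $\varepsilon > 0$ by the definition of the infimum. Two things must be verified en route, namely that the upward shift does not destroy the intersection property (this is Proposition \ref{prop-przeciecie}, applicable because $\psi_i \ge \varphi_i$) and that the chosen points lie in the effective domains (automatic, since a point with finite gap cannot be one at which $f$ or $g$ equals $+\infty$). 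By contrast the reverse direction needs nothing beyond the inclusion $\partial^\varepsilon_\Phi \subseteq \textnormal{supp}$ furnished by Proposition \ref{prop_subdiff}.
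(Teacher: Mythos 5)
Your proposal is correct and follows essentially the same route as the paper: the reverse implication via the inclusion $\partial^{\varepsilon}_{\Phi}\subseteq\textnormal{supp}$ from Proposition \ref{prop_subdiff}, and the forward implication by shifting $\varphi_i$ upward by the infimal gap $c_i=\inf_x(f-\varphi_i)$ (using closure of $\Phi$ under vertical shifts), selecting near-infimal points to land in the $\varepsilon$-subdifferential, and preserving the intersection property via Proposition \ref{prop-przeciecie}. The only difference is the cosmetic one of normalising the gap to zero before choosing the points rather than after, so nothing further is needed.
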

\begin{proof} $(ii)\Rightarrow(i)$. Follows 
	immediately from Proposition \ref{prop_subdiff}.
	
	\noindent
	 $(i)\Rightarrow(ii)$. Since  $\varphi_1\in\textnormal{supp}(f)$ and $\varphi_2\in\textnormal{supp}(g)$, we have
	$$
	\inf\limits_{x\in X}\{ f(x)-\varphi_{1}(x)\}=:c_1\ge 0 \ \ \text{and} \ \ \inf\limits_{x\in X} \{g(x)-\varphi_{2}(x)\}=:c_2\ge 0.
	$$
	Let $\varepsilon>0$. There exist  $x_{1} \in \textnormal{dom}(f)$ and $x_{2}\in \textnormal{dom}(g)$
	such that
	$$
	f(x_{1})-\varphi_{1}(x_{1})<c_{1}+\varepsilon \ \ \text{and} \ \ g(x_{2})-\varphi_{2}(x_{2})<c_{2}+\varepsilon.
	$$
	Define
	$$
	\bar{\varphi_{1}}(x):= \varphi_{1}(x)+c_1 \ \ \text{and} \ \ \bar{\varphi_{2}}(x):=\varphi_{2}(x)+ c_2.$$ 
	Functions $\bar{\varphi_{1}}, \bar{\varphi_{2}}$ belong to $\Phi$ and
		$$
		-f(x_{1})>-\bar{\varphi_{1}}(x_{1})-\varepsilon \ \ \text{and} \ \ -g(x_{2})>-\bar{\varphi_{2}}(x_{2})-\varepsilon.
		$$
	Notice that
	$$
	\bar{\varphi_{1}}(x)\leq f(x) \ \text{and} \ \ \bar{\varphi_{2}}(x)\leq g(x) \ \ \text{for all} \ \ x\in X.
	$$
	So,
	$\bar{\varphi_1}\in\partial_{\Phi}^{\varepsilon}f(x_{1})$ and $\bar{\varphi_2}\in\partial_{\Phi}^{\varepsilon}g(x_{2})$.
	It is obvious that $\bar{\varphi_{1}}\geq \varphi_{1}$ and  $\bar{\varphi_{2}}\geq \varphi_{2}$ and by Proposition \ref{prop-przeciecie},   $\bar{\varphi_{1}}$ and $\bar{\varphi_{2}}$ have the  intersection property on $X$ at the level $\alpha$.
\end{proof}

A characterization of the intersection property in terms
of $\Phi$-subgradients requires more advanced tools as well as  additional knowledge about properties of functions $\varphi$ from a given class $\Phi$.
We provide characterizations of the intersection property in terms of $\Phi$-subgradients for the class $\Phi_{lsc}$ (section 4) and for the class $\Phi_{conv}$ (section 5).

\section{The intersection property for $\Phi_{lsc}$-subgradients}

In this section we express  the intersection property for $\Phi_{lsc}$-convex functions  in terms of $\Phi_{lsc}$-subgradients. The main result of the section is Theorem \ref{th-subdif}.

To this aim we use  the Borwein-Preiss variational principle in the following form.

\begin{theorem}(\cite{schirotzek}, Theorem 8.3.3)
	\label{bp}
	Assume that $X$ is a Hilbert space and the function $f:X\rightarrow \bar{\mathbb{R}}$ is proper lower semicontinuous on $X$. Let $\varepsilon>0$ and let $y\in X$	be such that $f(y)\leq \inf\limits_{x\in X}f(x)+\varepsilon$. Then for any $\lambda>0$ there exist $z\in X$, such that
$$
\|z-x\|\leq \lambda, \ \ \ \ \ f(z)\leq f(x)+ \frac{\varepsilon}{\lambda^2}\|x-z\|^2 \ \ \ \ \text{for all} \ \ \ x\in X.
$$
\end{theorem}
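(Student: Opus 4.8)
The plan is to establish this as the $p=2$, Hilbert-space case of the Borwein--Preiss smooth variational principle. Put $\mu:=\varepsilon/\lambda^{2}$. Observe first that $f(y)\le\inf_{X}f+\varepsilon$ already forces $\inf_{X}f>-\infty$, so $f$ is bounded below. In $\mathbb{R}^{n}$ one could simply minimise the coercive lower semicontinuous function $f(\cdot)+\mu\|\cdot-y\|^{2}$ over a sublevel set; in an infinite-dimensional Hilbert space coercivity and lower semicontinuity no longer force the infimum to be attained, so the core of the proof is to replace that compactness step by an iterative construction combined with completeness of $X$.

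First I would fix weights $\mu_{i}>0$ with $\sum_{i=0}^{\infty}\mu_{i}=\mu$ and tolerances $\eta_{i}\downarrow 0$ to be calibrated later, and set $v_{0}:=y$. Then I would build $v_{1},v_{2},\dots$ inductively: given $v_{0},\dots,v_{n}$ the function
$$
g_{n}(x):=f(x)+\sum_{i=0}^{n}\mu_{i}\|x-v_{i}\|^{2}
$$
is proper, lower semicontinuous and bounded below, so one may select $v_{n+1}$ with $g_{n}(v_{n+1})\le\inf_{X}g_{n}+\eta_{n+1}$.

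The step I expect to be the real obstacle is proving that $(v_{n})$ is Cauchy, and this is exactly where completeness is essential. I would argue by a telescoping comparison of consecutive near-minimisers. Since $g_{n+1}=g_{n}+\mu_{n+1}\|\cdot-v_{n+1}\|^{2}\ge g_{n}$ we have $\inf g_{n+1}\ge\inf g_{n}$, while $\inf g_{n+1}\le g_{n+1}(v_{n+1})=g_{n}(v_{n+1})\le\inf g_{n}+\eta_{n+1}$. Evaluating $g_{n+1}$ at its own near-minimiser $v_{n+2}$ and combining these inequalities yields
$$
\mu_{n+1}\|v_{n+2}-v_{n+1}\|^{2}\le\eta_{n+1}+\eta_{n+2},
$$
so that, with the tolerances chosen small relative to the weights, $\sum_{n}\|v_{n+1}-v_{n}\|<\infty$. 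Hence $(v_{n})$ converges to some $z\in X$ by completeness.

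Finally I would pass to the limit. A standard lower-semicontinuity bookkeeping (using $v_{n}\to z$ and the monotonicity $\inf g_{n}\uparrow$) shows that $z$ minimises the total functional $g_{\infty}(x)=f(x)+\sum_{i=0}^{\infty}\mu_{i}\|x-v_{i}\|^{2}$. In a Hilbert space the infinite sum collapses, by expanding the squared norms, to a single quadratic,
$$
\sum_{i=0}^{\infty}\mu_{i}\|x-v_{i}\|^{2}=\mu\,\|x-\bar v\|^{2}+\textrm{const},\qquad \bar v:=\frac{1}{\mu}\sum_{i=0}^{\infty}\mu_{i}v_{i},
$$
so minimality of $z$ for $g_{\infty}$ gives $f(z)\le f(x)+\mu\|x-\bar v\|^{2}$ for every $x$; thus the argument naturally produces the quadratic support centred at the barycentre $\bar v$ of the iterates. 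The localisation $\|z-y\|\le\lambda$ would then follow by tracking constants through the construction: already the first step gives $\mu_{0}\|v_{1}-y\|^{2}\le f(y)-\inf_{X}f+\eta_{1}\le\varepsilon+\eta_{1}$, and an analogous control of the later increments, together with a suitable calibration of the weights $\mu_{i}$ and tolerances $\eta_{i}$, yields $\mu\|z-y\|^{2}\le\varepsilon$, that is $\|z-y\|^{2}\le\varepsilon/\mu=\lambda^{2}$.
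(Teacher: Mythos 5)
First, a point of comparison: the paper does not prove this statement at all — it is quoted from \cite{schirotzek} (Theorem 8.3.3) and used as a black box — so there is no in-paper proof to measure yours against. Your sketch is the standard Borwein--Preiss iteration, and its mechanical steps are sound: the choice of near-minimisers $v_{n+1}$ of $g_n$, the telescoping estimate $\mu_{n+1}\|v_{n+2}-v_{n+1}\|^{2}\le\eta_{n+1}+\eta_{n+2}$, the Cauchy/completeness argument, the passage to the limit functional $g_{\infty}$, and the collapse of $\sum_{i}\mu_{i}\|x-v_{i}\|^{2}$ to a single quadratic centred at the barycentre $\bar v$ are all correct (as is your silent correction of the typo $\|z-x\|\le\lambda$ to $\|z-y\|\le\lambda$).

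The genuine gap is the very last step. What your construction delivers is $f(z)+\mu\|z-\bar v\|^{2}\le f(x)+\mu\|x-\bar v\|^{2}$, a quadratic minorant centred at $\bar v$, whereas the statement to be proved has the quadratic centred at $z$ itself. These are not interchangeable: since $\|x-\bar v\|^{2}-\|z-\bar v\|^{2}=\|x-z\|^{2}+2\langle x-z,\,z-\bar v\rangle$, passing from the $\bar v$-centred inequality to the $z$-centred one leaves an uncontrolled first-order term $2\mu\langle x-z,\,z-\bar v\rangle$, and $\bar v\ne z$ in general because $\bar v$ carries the positive weight $\mu_{0}/\mu$ at $v_{0}=y$. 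You acknowledge that the argument ``naturally produces the quadratic support centred at the barycentre'' but never bridge to the centre $z$ — and this cannot be repaired, because the $z$-centred version is false. Take $X=\mathbb{R}$, $f(x)=-\varepsilon\min(|x|,1)$, $y=0$ (so $f(y)=\inf_{X}f+\varepsilon$) and any $\lambda\le 1/2$: for every $z$ with $|z|\le\lambda$ and every $t\in(0,\lambda^{2})$, the required inequality at $x=z+t\,\mathrm{sgn}(z)$ (with $\mathrm{sgn}(0):=1$) reads $-\varepsilon|z|\le-\varepsilon(|z|+t)+\tfrac{\varepsilon}{\lambda^{2}}t^{2}$, i.e.\ $t\ge\lambda^{2}$, which fails. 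So the Borwein--Preiss conclusion must be stated with an auxiliary centre (your $\bar v$, which the construction keeps within $O(\lambda)$ of $y$ and of $z$); your proof in effect establishes that correct version, and the defect lies in the statement you were handed rather than in your construction. If a $z$-centred second-order minorant is wanted, one must either retain the extra linear term $2\mu\langle x-z,\bar v-z\rangle$ explicitly or restate the result with the centre $\bar v$ — a change that would then have to be propagated into the proof of Theorem \ref{i-r}, which currently invokes the $z$-centred form verbatim.
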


With the help of this variational principle we prove the following variant of the Br\o{}nsted-Rockafellar theorem for $\Phi_{lsc}$-convex functions. The proof we present below is an adaptation of  the  proof of   Proposition 5.3 of \cite{ioffe-rub}  to our definition of subgradients.

\begin{theorem}
	\label{i-r}
	Let $\varepsilon>0$, $f:X\rightarrow\bar{\mathbb{R}}$ be a proper $\Phi_{lsc}-$convex function, $y\in \textnormal{dom}(f)$ and \\ $\varphi(\cdot)=-a\|\cdot\|^2+\left\langle \ell,\cdot\right\rangle+c \in \partial_{\Phi_{lsc}}^{\varepsilon}f(y) $.  For every $\lambda >0$ there exist $\bar{y}\in  \textnormal{dom}(f)$ and \\ $\bar{\varphi}(\cdot)=-\bar{a}\|\cdot\|^2+\left\langle \bar{\ell},\cdot\right\rangle+\bar{c} \in\partial_{\Phi_{lsc}}f(\bar{y}) $   such that
	$$
	\|y- \bar{y}\|\leq \lambda, \ \ \  \|\ell-\bar{\ell}\| \leq\frac{2\varepsilon}{\lambda^2}(\lambda + \|y\|), \  \ \bar{a}-a= \frac{\varepsilon}{\lambda^2} \ \  \text{and} \ \ c-\bar{c}\leq\frac{\varepsilon}{\lambda^2}\|\bar{y}\|^2.
	$$
\end{theorem}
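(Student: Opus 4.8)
The plan is to reduce the statement to the Borwein--Preiss variational principle (Theorem \ref{bp}) applied to the perturbed function $h:=f-\varphi$. First I would unpack what $\varphi\in\partial_{\Phi_{lsc}}^{\varepsilon}f(y)$ means: by definition $\varphi(x)-\varphi(y)-\varepsilon\le f(x)-f(y)$ for all $x$, which rearranges to $h(x)\ge h(y)-\varepsilon$ for all $x$, i.e. $h(y)\le\inf_{x\in X}h(x)+\varepsilon$. Thus $y$ is an $\varepsilon$-minimizer of $h$, and $\inf_X h\ge h(y)-\varepsilon>-\infty$. Since $\varphi$ is a finite-valued continuous quadratic, $h=f-\varphi$ is proper (it is finite at $y\in\textnormal{dom}(f)$ and never $-\infty$) and lower semicontinuous (a lsc $f$ minus a continuous function), so Theorem \ref{bp} applies to $h$.

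Next, applying Theorem \ref{bp} to $h$ with the given $\varepsilon$ and $\lambda$ produces $\bar{y}\in X$ with $\|\bar{y}-y\|\le\lambda$ and
$$
h(\bar{y})\le h(x)+\frac{\varepsilon}{\lambda^2}\|x-\bar{y}\|^2\quad\text{for all }x\in X.
$$
Taking $x=y$ and using $h(y)<\infty$ shows $h(\bar{y})<\infty$, hence $\bar{y}\in\textnormal{dom}(f)$. Rewriting this inequality in terms of $f$ and $\varphi$ and rearranging gives, for all $x$,
$$
\Big(\varphi(x)-\frac{\varepsilon}{\lambda^2}\|x-\bar{y}\|^2\Big)-\Big(\varphi(\bar{y})-\frac{\varepsilon}{\lambda^2}\|\bar{y}-\bar{y}\|^2\Big)\le f(x)-f(\bar{y}).
$$
So the natural candidate is $\bar{\varphi}(x):=\varphi(x)-\frac{\varepsilon}{\lambda^2}\|x-\bar{y}\|^2$, and the displayed inequality is exactly the subgradient inequality $\bar{\varphi}(x)-\bar{\varphi}(\bar{y})\le f(x)-f(\bar{y})$, so $\bar{\varphi}\in\partial_{\Phi_{lsc}}f(\bar{y})$ provided $\bar{\varphi}\in\Phi_{lsc}$.

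Finally I would expand $\bar{\varphi}$ into the canonical form $-\bar{a}\|x\|^2+\langle\bar{\ell},x\rangle+\bar{c}$ using $\|x-\bar{y}\|^2=\|x\|^2-2\langle x,\bar{y}\rangle+\|\bar{y}\|^2$ and the Riesz identification of $\bar{y}$ with an element of $X^{*}$. This yields $\bar{a}=a+\frac{\varepsilon}{\lambda^2}$, $\bar{\ell}=\ell+\frac{2\varepsilon}{\lambda^2}\bar{y}$, and $\bar{c}=c-\frac{\varepsilon}{\lambda^2}\|\bar{y}\|^2$. The coefficient relations then follow: $\bar{a}-a=\frac{\varepsilon}{\lambda^2}$ and $c-\bar{c}=\frac{\varepsilon}{\lambda^2}\|\bar{y}\|^2$ directly; and $\|\ell-\bar{\ell}\|=\frac{2\varepsilon}{\lambda^2}\|\bar{y}\|$ combined with $\|\bar{y}\|\le\|\bar{y}-y\|+\|y\|\le\lambda+\|y\|$ gives $\|\ell-\bar{\ell}\|\le\frac{2\varepsilon}{\lambda^2}(\lambda+\|y\|)$.

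The main obstacle is not any single computation but the structural bookkeeping ensuring the perturbed quadratic $\bar{\varphi}$ remains inside $\Phi_{lsc}$: the decisive observation is that subtracting a nonnegative multiple of $\|\cdot\|^2$ only \emph{increases} the leading coefficient to $\bar{a}=a+\varepsilon/\lambda^2\ge0$, so the Borwein--Preiss penalty is absorbable within the class. One should also read the conclusion of Theorem \ref{bp} as $\|z-y\|\le\lambda$ for the near-minimizer $y$, and keep track of the Riesz identification needed to interpret $\bar{y}$ as the increment $\bar{\ell}-\ell$ of the linear part.
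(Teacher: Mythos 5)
Your proof is correct and follows essentially the same route as the paper: both apply the Borwein--Preiss principle to the perturbed function $f-\varphi$ (the paper's $W(x)=f(x)+a\|x\|^2-\langle\ell,x\rangle$ is your $h$ up to the constant $c$) and then absorb the quadratic penalty $\frac{\varepsilon}{\lambda^2}\|\cdot-\bar{y}\|^2$ into the class $\Phi_{lsc}$ by increasing the leading coefficient. The only difference is that the paper adds the extra constant $f(\bar{y})-\varphi(\bar{y})\ge 0$ to $\bar{\varphi}$ so that $\bar{\varphi}(\bar{y})=f(\bar{y})$, which turns your equality $c-\bar{c}=\frac{\varepsilon}{\lambda^2}\|\bar{y}\|^2$ into the stated inequality; this is immaterial for the subgradient property.
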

\begin{proof}
	Since $\varphi\in \partial_{\Phi_{lsc}}^{\varepsilon}f(y) $ we have, by Proposition \ref{prop_subdiff}
	  $$
	  f(x)-f(y)\geq 
	  -a\|x\|^2 +\left\langle \ell,x\right\rangle +a\|y\|^2 -\left\langle \ell,y\right\rangle -  \varepsilon \ \ \ \text{for all} \ \ \ x\in X.
	  $$
	 which can be rewritten as
	$$
	W(y)\leq\inf_{x\in X}W(x)+\varepsilon,
	$$
	 where $W(x):=f(x)+a\|x\|^2-  \left\langle \ell,x\right\rangle$.
	By Theorem \ref{bp}, for every $\lambda>0$ there exists $\bar{y}\in X$ such that $\|y- \bar{y}\|\leq \lambda$ and 
	$$
	W(\bar{y})\leq W(x)+ \frac{\varepsilon}{\lambda^2}\|x-\bar{y}\|^2 \ \ \ \ \text{for all} \ \ \ x\in X.
	$$
	Let $\bar{\varphi}(x)= -(a+\frac{\varepsilon}{\lambda^2})\|x\|^2 +\left\langle \ell+ \frac{2\varepsilon}{\lambda^2}\bar{y},x \right\rangle +c - \frac{\varepsilon}{\lambda^2}\|\bar{y}\|^2  - \varphi(\bar{y})+f(\bar{y})$. We have
	$$
	f(x)-f(\bar{y})\geq 	\begin{array}[t]{l}
	  -a\|x\|^2 +\left\langle \ell,x\right\rangle +a\|\bar{y}\|^2 -\left\langle \ell,\bar{y}\right\rangle - \frac{\varepsilon}{\lambda^2}\|x-\bar{y}\|^2 \\
	  \geq
	-a\|x\|^2 +\left\langle \ell,x\right\rangle +a\|\bar{y}\|^2 -\left\langle \ell,\bar{y}\right\rangle -\frac{\varepsilon}{\lambda^2}\|x\|^2 + \frac{2\varepsilon}{\lambda^2}\left\langle x, \bar{y}\right\rangle -\frac{\varepsilon}{\lambda^2}\|\bar{y}\|^2 \\
	\geq
	\left\langle \ell+\frac{2\varepsilon}{\lambda^2}\bar{y} ,x\right\rangle -
	(a+\frac{\varepsilon}{\lambda^2})\|x\|^2- \left\langle \ell+\frac{2\varepsilon}{\lambda^2}\bar{y} ,\bar{y}\right\rangle +	(a+\frac{\varepsilon}{\lambda^2})\|\bar{y}\|^2\\
 = \bar{\varphi}(x) -\bar{\varphi}(\bar{y}).
	  
	\end{array}
	$$
	Thus $\bar{\varphi}\in \partial_{\Phi_{lsc}}f(\bar{y})$. Moreover, 
	$$
	\|\ell - \bar{\ell} \|= \|-\frac{2\varepsilon}{\lambda^2}\bar{y} \|=\frac{2\varepsilon}{\lambda^2}\|\bar{y}\|\leq \frac{2\varepsilon}{\lambda^2} (\lambda + \|y\|), 
	$$
	$$
		c-\bar{c}=c-c + \frac{\varepsilon}{\lambda^2}\|\bar{y}\|^2  + \varphi(\bar{y})-f(\bar{y})\leq  \frac{\varepsilon}{\lambda^2}\|\bar{y}\|
	\ \ \text{and}\ \  \  
	\bar{a}-a=\frac{\varepsilon}{\lambda^2}.
	$$
\end{proof}

The domain of $\Phi_{lsc}$-subdifferential is defined as follows
$$
\textnormal{dom}(\partial_{\Phi_{lsc}}f):=\{x\in X \ :\ \partial_{\Phi_{lsc}}f(x)\neq \emptyset \}.
$$
The following theorem is an immediate corollary from Theorem 	\ref{i-r} and Proposition \ref{prop_subdiff}.
\begin{theorem}
	Let $X$ be a Hilbert space and  $f:X\rightarrow \bar{\mathbb{R}}$  be a proper $\Phi_{lsc}$-convex function. Then the set $\textnormal{dom}(\partial_{\Phi_{lsc}}f)$ is dense in $\textnormal{dom}(f)$.
\end{theorem}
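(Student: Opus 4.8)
The plan is to derive density directly from the Br\o{}nsted--Rockafellar-type result of Theorem~\ref{i-r}. The goal is to show that every point of $\textnormal{dom}(f)$ can be approximated arbitrarily well by points of $\textnormal{dom}(\partial_{\Phi_{lsc}}f)$, so I fix an arbitrary $y\in\textnormal{dom}(f)$ and an arbitrary $\lambda>0$, and I aim to produce a point $\bar y\in\textnormal{dom}(\partial_{\Phi_{lsc}}f)$ with $\|y-\bar y\|\le\lambda$.

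First I need to guarantee that the hypotheses of Theorem~\ref{i-r} can be met, namely that $\partial^{\varepsilon}_{\Phi_{lsc}}f(y)\neq\emptyset$ for some $\varepsilon>0$. By Proposition~\ref{prop_subdiff}, for a proper $\Phi_{lsc}$-convex function this $\varepsilon$-subdifferential equals $\{\varphi\in\textnormal{supp}(f):f(y)=\varphi(y)+\varepsilon\}$. Since $f$ is $\Phi_{lsc}$-convex we have $f(y)=\sup\{\varphi(y):\varphi\in\textnormal{supp}(f)\}$, and because $y\in\textnormal{dom}(f)$ this supremum is finite; hence for any $\varepsilon>0$ there exists $\varphi\in\textnormal{supp}(f)$ with $f(y)-\varepsilon\le\varphi(y)\le f(y)$. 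Adjusting by a vertical shift, which keeps $\varphi$ inside the class $\Phi_{lsc}$ (it is closed under vertical shift), I obtain some $\varphi\in\textnormal{supp}(f)$ with $f(y)=\varphi(y)+\varepsilon'$ for a suitable $\varepsilon'\le\varepsilon$, so Proposition~\ref{prop_subdiff} yields $\varphi\in\partial^{\varepsilon'}_{\Phi_{lsc}}f(y)\neq\emptyset$.

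Next I apply Theorem~\ref{i-r} to this $\varphi$ with the chosen $\varepsilon'$ and the given $\lambda$. The theorem furnishes a point $\bar y\in\textnormal{dom}(f)$ together with $\bar\varphi\in\partial_{\Phi_{lsc}}f(\bar y)$, and in particular it guarantees $\|y-\bar y\|\le\lambda$. The mere existence of $\bar\varphi\in\partial_{\Phi_{lsc}}f(\bar y)$ shows that $\bar y\in\textnormal{dom}(\partial_{\Phi_{lsc}}f)$. Thus I have located a point of $\textnormal{dom}(\partial_{\Phi_{lsc}}f)$ within distance $\lambda$ of $y$, and since $\lambda>0$ was arbitrary, $y$ lies in the closure of $\textnormal{dom}(\partial_{\Phi_{lsc}}f)$ relative to $\textnormal{dom}(f)$.

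The main subtlety I expect is the bookkeeping in the first step: ensuring that the $\varepsilon$-subdifferential at $y$ is genuinely nonempty so that Theorem~\ref{i-r} is applicable. This rests on combining the $\Phi_{lsc}$-convexity of $f$ (to approximate $f(y)$ from below by support functions) with closure under vertical shift (to hit the exact equality $f(y)=\varphi(y)+\varepsilon$ required by Proposition~\ref{prop_subdiff}). Once nonemptiness is secured, the density conclusion is an immediate consequence of the distance estimate $\|y-\bar y\|\le\lambda$ in Theorem~\ref{i-r}, so the proof is short and its only real content is verifying the applicability of the Br\o{}nsted--Rockafellar-type theorem at each point of the domain.
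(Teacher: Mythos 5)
Your proof is correct and follows exactly the route the paper intends: the paper gives no written proof, stating only that the theorem "is an immediate corollary from Theorem \ref{i-r} and Proposition \ref{prop_subdiff}", and your argument is precisely that corollary spelled out (nonemptiness of $\partial^{\varepsilon}_{\Phi_{lsc}}f(y)$ via $\Phi_{lsc}$-convexity and Proposition \ref{prop_subdiff}, then the distance estimate $\|y-\bar y\|\le\lambda$ from Theorem \ref{i-r}). The only cosmetic remark is that the vertical-shift adjustment is unnecessary, since taking $\varepsilon'=f(y)-\varphi(y)$ for the original $\varphi$ already gives the required equality.
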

The following technical fact  is used below.
	\begin{proposition}
		\label{bound}
		Let $X$ be a Hilbert space, $Z$ be a subset of $X$, $\alpha\in\mathbb{R}$ and  $\varphi_{1},\varphi_{2}:X\rightarrow \mathbb{R}$. If $\varphi_{1},\varphi_{2}$ have the  intersection property on $X$ at the level $\alpha$, then $\varphi_{1},\varphi_{2}$ have the  intersection property on $Z$ at the level $\alpha$.
	\end{proposition}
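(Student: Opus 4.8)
The plan is to prove this by a direct set-inclusion argument, since the statement is essentially a monotonicity property of the intersection condition with respect to the underlying set. First I would fix the interpretation of ``intersection property on $Z$'': for a subset $Z\subseteq X$ the relevant strict lower level sets are $\{x\in Z\ :\ \varphi_i(x)<\alpha\}=[\varphi_i<\alpha]\cap Z$, so the intersection property on $Z$ at level $\alpha$ reads
$$
\bigl([\varphi_1<\alpha]\cap Z\bigr)\cap\bigl([\varphi_2<\alpha]\cap Z\bigr)=\emptyset .
$$

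The key observation is the trivial inclusion $[\varphi_i<\alpha]\cap Z\subseteq[\varphi_i<\alpha]$ for $i=1,2$, which holds simply because $Z\subseteq X$. Consequently, intersecting the two level sets over $Z$ only yields a subset of the intersection taken over all of $X$; that is,
$$
\bigl([\varphi_1<\alpha]\cap Z\bigr)\cap\bigl([\varphi_2<\alpha]\cap Z\bigr)\ \subseteq\ [\varphi_1<\alpha]\cap[\varphi_2<\alpha].
$$

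I would then finish by invoking the hypothesis: since $\varphi_1,\varphi_2$ have the intersection property on $X$ at level $\alpha$, the right-hand side above is empty, so the left-hand side is empty as well, which is exactly the intersection property on $Z$. No analytic structure of $X$ as a Hilbert space, nor any specific form of $\varphi_1,\varphi_2$, is needed; the argument is purely set-theoretic.

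There is really no substantial obstacle here: the only point requiring minor care is confirming that the phrase ``intersection property on $Z$'' is read as taking the strict sublevel sets inside $Z$ rather than on all of $X$, after which the result is an immediate consequence of the fact that emptiness of an intersection is inherited by passing to subsets. The proposition should therefore be recorded as a short technical lemma, and the proof can be given in one or two lines.
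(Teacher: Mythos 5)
Your proof is correct, and it matches the intended justification: the paper itself states this proposition as a technical fact and gives no proof at all, precisely because the argument is the purely set-theoretic monotonicity you describe, namely that $\bigl([\varphi_1<\alpha]\cap Z\bigr)\cap\bigl([\varphi_2<\alpha]\cap Z\bigr)\subseteq[\varphi_1<\alpha]\cap[\varphi_2<\alpha]=\emptyset$. Your reading of ``intersection property on $Z$'' as taking the strict sublevel sets inside $Z$ is the correct one (it is the reading used later in Theorem \ref{th-subdif}), and you are right that the Hilbert space hypothesis plays no role.
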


Theorem \ref{i-r} together with Theorem \ref{th-epsilon} allow to prove the 
main  result of this section.

\begin{theorem}
	\label{th-subdif}
	 Let $X$ be a Hilbert space and let $Z$ be a bounded subset of $X$. Let  $f,g:X\rightarrow \bar{\mathbb{R}}$ be proper 
	$\Phi_{lsc}$-convex functions and $\alpha\in\mathbb{R}$.
	
	Assume that there exist $\varphi_1\in\textnormal{supp}(f)$ and $\varphi_2\in\textnormal{supp}(g)$ such that $\varphi_1,\varphi_2$ have the  intersection property on $X$ at the level $\alpha$.
	
	Then  for any $\eta>0$ there exist  $x_{1} \in \textnormal{dom}(f)$, $x_{2}\in \textnormal{dom}(g)$ and $\bar{\varphi_1}\in\partial_{\Phi_{lsc}}f(x_{1})$ and $\bar{\varphi_2}\in\partial_{\Phi_{lsc}}g(x_{2})$ such that $\bar{\varphi_1},\bar{\varphi_{2}}$ have the  intersection property  on $Z$ at the level $\alpha-\eta$.
\end{theorem}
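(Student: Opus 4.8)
The plan is to combine the three tools just established: Theorem~\ref{th-epsilon} to pass from the supporting functions $\varphi_1,\varphi_2$ to $\varepsilon$-subgradients at suitable points, Theorem~\ref{i-r} to upgrade each $\varepsilon$-subgradient to a genuine $\Phi_{lsc}$-subgradient at a nearby point, and finally Proposition~\ref{bound} together with Proposition~\ref{prop-przeciecie} to control the intersection property after these perturbations. First I would fix $\eta>0$ and choose, for some small $\varepsilon>0$ and some $\lambda>0$ to be specified at the end, the points $x_1\in\textnormal{dom}(f)$, $x_2\in\textnormal{dom}(g)$ and $\varepsilon$-subgradients $\psi_1\in\partial_{\Phi_{lsc}}^{\varepsilon}f(x_1)$, $\psi_2\in\partial_{\Phi_{lsc}}^{\varepsilon}g(x_2)$ supplied by Theorem~\ref{th-epsilon}, which satisfy the intersection property on $X$ at the level $\alpha$. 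Applying Theorem~\ref{i-r} to $f$ at $x_1$ and to $g$ at $x_2$ produces $\bar\varphi_1\in\partial_{\Phi_{lsc}}f(\bar x_1)$ and $\bar\varphi_2\in\partial_{\Phi_{lsc}}g(\bar x_2)$ whose coefficients differ from those of $\psi_1,\psi_2$ by the explicit bounds in that theorem.

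The main obstacle is that applying Theorem~\ref{i-r} perturbs the functions: the quadratic coefficient increases by $\varepsilon/\lambda^2$, the linear part shifts by at most $2\varepsilon/\lambda^2(\lambda+\|x_i\|)$ in norm, and the constant changes. Because $\bar\varphi_i$ is no longer $\geq\psi_i$ everywhere, I cannot invoke Proposition~\ref{prop-przeciecie} directly, and the intersection property is not automatically preserved. This is where the boundedness of $Z$ becomes essential. On the bounded set $Z$ the difference $|\bar\varphi_i(x)-\psi_i(x)|$ can be bounded uniformly in $x\in Z$ by a quantity that tends to $0$ as $\varepsilon\to 0$ with $\lambda$ fixed: the quadratic and linear perturbations are controlled using the diameter of $Z$ and the bound on $\|x_i\|$, while the constant term is handled by the estimate on $c-\bar c$. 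Concretely I would show $\bar\varphi_i\ge\psi_i-\delta$ on $Z$ for a $\delta=\delta(\varepsilon,\lambda,Z)$ that can be made smaller than $\eta/2$.

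The final step is to convert this uniform estimate into the intersection property at the lowered level $\alpha-\eta$. Since $\psi_1,\psi_2$ have the intersection property on $X$ at level $\alpha$, by Proposition~\ref{bound} they also have it on $Z$ at level $\alpha$; that is, $[\psi_1<\alpha]\cap[\psi_2<\alpha]\cap Z=\emptyset$. From $\bar\varphi_i\ge\psi_i-\delta$ on $Z$ one deduces the inclusion $[\bar\varphi_i<\alpha-\eta]\cap Z\subseteq[\psi_i<\alpha-\eta+\delta]\cap Z\subseteq[\psi_i<\alpha]\cap Z$ once $\delta\le\eta$, so the two strict sublevel sets of $\bar\varphi_1,\bar\varphi_2$ on $Z$ at level $\alpha-\eta$ are disjoint. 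Choosing $\lambda$ arbitrary and then $\varepsilon$ small enough to force $\delta<\eta$ completes the argument. I expect the only real care to be in bookkeeping the explicit constants from Theorem~\ref{i-r} over the bounded set $Z$ and verifying that $\delta$ can indeed be driven below $\eta$ uniformly; the rest is a direct chaining of the cited results.
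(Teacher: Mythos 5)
Your strategy is exactly the one the paper follows: Theorem~\ref{th-epsilon} to produce $\varepsilon$-subgradients with the intersection property at level $\alpha$, Theorem~\ref{i-r} to convert them into exact $\Phi_{lsc}$-subgradients at nearby points, a uniform bound on the perturbation over the bounded set $Z$, and then Proposition~\ref{bound} together with Proposition~\ref{prop-przeciecie} to recover the intersection property on $Z$ at the lowered level $\alpha-\eta$. You also correctly identify the crux, namely that $\bar{\varphi_i}\geq\psi_i$ fails on all of $X$ and only an additive error controlled on $Z$ is available.

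There is, however, a genuine gap in your final parameter selection: you claim that $\delta$ tends to $0$ as $\varepsilon\to 0$ with $\lambda$ fixed, ``using the diameter of $Z$ and the bound on $\|x_i\|$'', and accordingly propose to choose $\lambda$ first and then $\varepsilon$ small. But the points $x_1,x_2$ delivered by Theorem~\ref{th-epsilon} depend on $\varepsilon$ (they are near-minimizers of $f-\varphi_1$ and $g-\varphi_2$ within $\varepsilon$), and there is no a priori bound on $\|x_i(\varepsilon)\|$ as $\varepsilon\to 0$; since the estimates of Theorem~\ref{i-r} involve $\frac{2\varepsilon}{\lambda^2}(\lambda+\|x_i\|)$ and $\frac{\varepsilon}{\lambda^2}\|\bar{x_i}\|^2$ with $\|\bar{x_i}\|\leq\lambda+\|x_i\|$, the quantity $\delta$ need not go to $0$ along your limit. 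Moreover, shrinking $\varepsilon$ after the fact invalidates the membership $\psi_i\in\partial^{\varepsilon}_{\Phi_{lsc}}f(x_i)$, which was obtained for the original $\varepsilon$. The quantifiers must be taken in the opposite order, as the paper does: fix $\varepsilon$ once and for all (the paper takes $\varepsilon=\eta/\gamma$ where $Z\subset\gamma B(0,1)$), obtain $x_1,x_2$ and the $\varepsilon$-subgradients, and only then choose $\lambda_1,\lambda_2$ large enough \emph{in terms of} $\|x_1\|,\|x_2\|$ and $\gamma$ (explicitly, $\lambda_i=1+\sqrt{1+2\|x_i\|+\gamma+\frac{1}{\gamma}\|x_i\|^2}$) so that the accumulated perturbation on $Z$ is at most $\eta$. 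With that correction your argument closes exactly as the paper's does.
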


\begin{proof}
	By the boundedness of  $Z$, there exists $\gamma>0$ such that $Z \subset \gamma B(0,1)$, where $B(0,1)$ is the closed unit ball. Let  $\varphi_1,\varphi_2$ satisfy the assumptions of the theorem. Let $\eta>0$ and $\varepsilon=\frac{\eta}{\gamma}$.
	
	By Theorem \ref{th-epsilon}, there exist  $x_{1} \in \textnormal{dom}(f)$, $x_{2}\in \textnormal{dom}(g)$ and $\tilde{\varphi_1}(x)=-\tilde{a_{1}}\|x\|^2+\left\langle\tilde{ \ell_{1}},x\right\rangle+ \tilde{c_{1}} \in\partial_{\Phi_{lsc}}^{\varepsilon}f(x_{1})$ and $\tilde{\varphi_2}(x)=-\tilde{a_{2}}\|x\|^2+\left\langle\tilde{ \ell_{2}},x\right\rangle+ \tilde{c_{2}}\in\partial_{\Phi_{lsc}}^{\varepsilon}g(x_{2})$ such that $\tilde{\varphi_1},\tilde{\varphi_2}$ have the intersection property on $X$ at the level $\alpha$.
	
	Let  $\lambda_{1}=1+\sqrt{1+2\|x_{1}\|+\gamma +\frac{1}{\gamma}\|x_{1}\|^2}$ and  $\lambda_{2}=1+\sqrt{1+2\|x_{2}\|+\gamma +\frac{1}{\gamma}\|x_{2}\|^2}$. By Theorem  \ref{i-r}, there exist  $\bar{x_{1}} \in \textnormal{dom}(f)$, $\bar{x_{2}}\in \textnormal{dom}(g)$	 and $ \bar{\varphi_{1}}(x)=-\bar{a_{1}}\|x\|^2+\left\langle\bar{ \ell_{1}},x\right\rangle+ \bar{c_{1}}\in\partial_{\Phi_{lsc}}f(\bar{x}_{1})$
	and $\bar{\varphi_{2}}(x)=-\bar{a_{2}}\|x\|^2+\left\langle\bar{ \ell_{2}},x\right\rangle+\bar{c_{2}}\in\partial_{\Phi_{lsc}}g(\bar{x}_{2})$ such that
	\begin{equation}
	\label{eq-diff1}
	\|\tilde{\ell_{1}}- \bar{\ell_{1}} \|\leq \frac{2\varepsilon}{\lambda_{1}^2}(\lambda_{1} + \|x_{1}\|), \ \ \ \bar{a}_{1}-\tilde{a_{1}}=\frac{\varepsilon}{\lambda_{1}^2} \ \ \text{and} \
	\  \tilde{c_{1}}-\bar{c}_{1}\leq \frac{\varepsilon}{\lambda_{1}^2}\|x_{1}\|^2
	\end{equation}
	\begin{equation}
	\label{eq-diff2}
	\|\tilde{\ell_{2}}- \bar{\ell_{2}} \|\leq \frac{2\varepsilon}{\lambda_{2}^2}(\lambda_{2} + \|x_{2}\|), \ \ \ \bar{a}_{2}-\tilde{a_{2}}=\frac{\varepsilon}{\lambda_{2}^2} \ \ \text{and} \
	\  \tilde{c_{2}}-\bar{c}_{2}\leq \frac{\varepsilon}{\lambda_{2}^2}\|x_{2}\|^2.
	\end{equation}
 By \eqref{eq-diff1} and	\eqref{eq-diff2} and  by the boundedness of $Z$, for every $x\in Z$  we get
	\begin{equation}
	\label{eq-diff1-1}
	\tilde{\ell_{1}}(x) \leq  \bar{\ell_{1}}(x)+\frac{2\gamma\varepsilon}{\lambda_{1}^2}(\lambda_{1} + \|x_{1}\|), \  -\tilde{a_{1}}\|x\|^2=-\bar{a}_{1}\|x\|^2+\frac{\varepsilon}{\lambda_{1}^2}\|x\|^2\leq-\bar{a}_{1}\|x\|^2+\frac{\varepsilon}{\lambda_{1}^2}\gamma^2 ,
	\end{equation}
	\begin{equation}
	\label{eq-diff2-1}
	\tilde{\ell_{2}}(x) \leq \bar{\ell_{2}}(x)+\frac{2\gamma\varepsilon}{\lambda_{2}^2}(\lambda_{2} + \|x_{2}\|), \ -\tilde{a_{2}}\|x\|^2=-\bar{a}_{2}\|x\|^2+\frac{\varepsilon}{\lambda_{2}^2}\|x\|^2\leq -\bar{a}_{2}\|x\|^2+\frac{\varepsilon}{\lambda_{1}^2}\gamma^2 .
	\end{equation}
	By \eqref{eq-diff1}, \eqref{eq-diff2}, \eqref{eq-diff1-1}, \eqref{eq-diff2-1} and the definitions of $\varepsilon$, $\lambda_{1}$ and $\lambda_{2}$ we get
	\begin{equation}
	\label{eq-diff3}
	\tilde{\varphi_{1}}(x) \leq  \bar{\varphi_{1}}(x) +\eta,\ \ \tilde{\varphi_{2}}(x) \leq \bar{\varphi_{2}}(x)+\eta \ \ \text{for every}  \ \ x\in Z.
	\end{equation}
	Since the intersection property holds for $\tilde{\varphi_{1}}, \tilde{\varphi_{2}}$ on $X$ at the level $\alpha$, then by Proposition \ref{bound}, the intersection property holds for $\tilde{\varphi_{1}}, \tilde{\varphi_{2}}$ on $Z$ at the level $\alpha$.
	By Proposition \ref{prop-przeciecie}, the intersection property holds
	for $ \bar{	\varphi_{1}}+\eta$ and $\bar{\varphi_{2}}+\eta$ on $Z$ at the level $\alpha$, i.e. the intersection property holds for $ \bar{\varphi_{1}}$ and $\bar{\varphi_{2}}$ on $Z$ at the level $\alpha -\eta$. 
\end{proof}	

The example below shows that, in general, we cannot expect, the intersection property for $\Phi_{lsc}$-subgradients on the whole space $X$.

		\begin{example}
			
			Let $X=\mathbb{R}$,  $f(x):=2^{x}$, $g(x):=-|x|+2$ and $\alpha=0$.  
			The functions $f$ and $g$ are $\Phi_{lsc}$-convex. 
			Let $\varphi_{1}(x)\equiv 0$, then $\varphi_{1}\in \Phi_{lsc}$ and $\varphi_{1}\in\text{supp}(f)$. The set $[\varphi_{1}<0]$ is empty, then for every $\varphi_{2}\in \text{supp}(g)$ functions $\varphi_{1}$ and $\varphi_{2}$ have the intersection property on $X$ at the level $0$. 
			
			On the other hand,  for every $\bar{x}\in X$ and $\bar{\varphi}\in \partial_{\Phi_{lsc}}g(\bar{x})$ it must be
			$$
			\bar{\varphi}(x)=-ax^2+bx+c,
			$$
			where $a>0$, $b,c\in\mathbb{R}$. Since all $\Phi_{lsc}$-subgradients of  $f$ are affine functions,  there exist no $\Phi_{lsc}$-subgradient $\varphi_{f}$ of  $f$ and a $\Phi_{lsc}$-subgradient  $\varphi_{g}$ of $g$ such that  $\varphi_{f}$
			and  $\varphi_{g}$ have the intersection property on $X$ at the level $0$. 
			
			In view of  Theorem \ref{th-subdif}, for every bounded subset $Z\subset X$  there exist subgradients of  $f$ and subgradients of $g$ which have the intersection property on $Z$ at the level $0$.
		\end{example}
\section{The intersection property for $\Phi_{conv}$-subgradients}
Let $X$ be a topological vector space. 
The subdifferential $\partial f$ of function $f:X\rightarrow\bar{\mathbb{R}}$ at the point $\bar{x}\in \textnormal{dom}(f)$ is defined as follows
	$$
	\partial f(\bar{x}) := \{\ell \in  X^{*} \ : \ f(x)- f(\bar{x}) \geq \left\langle \ell,x -\bar{x}\right\rangle , \ \forall  x\in X\}.
	$$
There is an one-to-one correspondence between the $\Phi_{conv}$-subdifferential and the subdifferential of a convex lower semicontinuous function $f$, i.e. $\varphi=\ell+c\in \partial_{\Phi_{conv}} f(\bar{x})$ if and only if  $\ell \in \partial f(\bar{x}) $.

We recall that the normal cone to a convex set $C\subset X$ at the point $\bar{x}\in C$ is defined as
$$
N_{C}(\bar{x}):=\{\ell \in X^* \ : \ \left\langle \ell,\bar{x}-x \right\rangle\geq 0, \ \forall x\in C \},
$$
These definitions can be found e.g. in \cite{ek-tem}.

\begin{proposition}(\cite{ioffe-ti}, p. 200, Proposition 2)
	\label{norm}
	Let $X$ be a topological vector space. Let $\alpha\in\mathbb{R}$. Let $f:X\rightarrow\bar{\mathbb{R}}$ be proper convex and continuous and $[f<\alpha]\neq\emptyset$. Then
	$$
	N_{[f\leq\alpha]}(\bar{x})=\mathbb{R}_{+}\partial f(\bar{x}), \ \text{where} \ \ f(\bar{x})=\alpha.
	$$
\end{proposition}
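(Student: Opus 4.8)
The plan is to prove the two inclusions separately, the inclusion $\mathbb{R}_{+}\partial f(\bar{x})\subseteq N_{[f\le\alpha]}(\bar{x})$ being elementary and the reverse inclusion being the substantive one. Throughout write $C:=[f\le\alpha]$, which is convex since $f$ is convex, and note that $\partial f(\bar{x})\neq\emptyset$ because $f$ is convex and continuous at $\bar{x}$. For the easy inclusion, let $\ell\in\partial f(\bar{x})$. Then for every $x\in C$ we have $\langle\ell,x-\bar{x}\rangle\le f(x)-f(\bar{x})=f(x)-\alpha\le 0$, that is $\langle\ell,\bar{x}-x\rangle\ge 0$, so $\ell\in N_{C}(\bar{x})$. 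Since $N_{C}(\bar{x})$ is a cone containing $0$, it follows that $t\ell\in N_{C}(\bar{x})$ for every $t\ge 0$, whence $\mathbb{R}_{+}\partial f(\bar{x})\subseteq N_{C}(\bar{x})$.

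For the reverse inclusion, take $\ell\in N_{C}(\bar{x})$. The case $\ell=0$ is immediate, since $0=0\cdot\varphi$ for any $\varphi\in\partial f(\bar{x})$. Assume $\ell\neq 0$. The aim is to produce a single $t>0$ with $\langle\ell,x-\bar{x}\rangle\le t\,(f(x)-\alpha)$ for all $x\in X$; dividing by $t$ then gives $\ell/t\in\partial f(\bar{x})$ and hence $\ell\in\mathbb{R}_{+}\partial f(\bar{x})$. To find $t$, introduce the optimal-value function
$$
h(v):=\sup\{\langle\ell,x-\bar{x}\rangle\ :\ f(x)\le\alpha+v\},\qquad v\in\mathbb{R}.
$$
Because $\langle\ell,\cdot-\bar{x}\rangle$ is linear and the constraint set $[f\le\alpha+v]$ grows convexly with $v$, the function $h$ is concave and nondecreasing; moreover $h(0)=0$, since $\langle\ell,x-\bar{x}\rangle\le 0$ on $C$ gives $h(0)\le 0$, while $x=\bar{x}$ gives $h(0)\ge 0$. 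The decisive point is that $h$ is finite on a neighbourhood of $0$: the hypothesis $[f<\alpha]\neq\emptyset$ furnishes a point $x_{0}$ with $f(x_{0})<\alpha$, so $h(v)\ge\langle\ell,x_{0}-\bar{x}\rangle>-\infty$ for all $v\ge f(x_{0})-\alpha$ (and $h(v)\ge h(0)=0$ for $v\ge 0$ by monotonicity), while $\langle\ell,\cdot\rangle$ is bounded above by $\langle\ell,\bar{x}\rangle$ on $C$ and hence, all nonempty sublevel sets of a convex function sharing the same recession directions, remains bounded above on each $[f\le\alpha+v]$, so $h(v)<+\infty$ for every $v$. Thus $0$ lies in the interior of the interval on which the finite concave function $h$ is defined, so $h$ admits a supergradient $t$ at $0$: $h(v)\le h(0)+tv=tv$ for all $v$. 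Taking $v=f(x)-\alpha$ yields $\langle\ell,x-\bar{x}\rangle\le h(f(x)-\alpha)\le t\,(f(x)-\alpha)$. Here $t\ge 0$ because $h$ is nondecreasing, and $t\neq 0$ because $\langle\ell,\cdot-\bar{x}\rangle\le 0$ everywhere would force $\ell=0$; hence $t>0$, which is exactly the required inequality.

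The main obstacle is the finiteness/qualification step, i.e. showing that $0$ is interior to the effective domain of $h$ and that $h$ is proper there, because this is precisely the constraint-qualification content of the proposition and the only place where $[f<\alpha]\neq\emptyset$ and the continuity of $f$ are genuinely used. In a general topological vector space the recession-cone argument for the upper bound should be handled with care; an equivalent and perhaps safer route is to replace the value-function argument by a Hahn--Banach separation in $X\times\mathbb{R}$ of $\mathrm{epi}(f-\alpha)$ from the open convex region determined by $\ell$, where the Slater condition guarantees that the separating functional is non-vertical and can therefore be normalized in its last coordinate to yield a genuine subgradient of $f$ at $\bar{x}$ proportional to $\ell$.
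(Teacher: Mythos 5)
The paper does not actually prove this proposition: it is imported verbatim from Ioffe--Tihomirov (\cite{ioffe-ti}, p.~200, Proposition~2), so there is no in-paper argument to compare yours against. Your proof is the standard Lagrange-multiplier/value-function derivation, and its overall structure is sound: the easy inclusion is correct, and reducing the hard inclusion to the existence of a supergradient of the concave marginal function $h$ at $0$ is exactly the classical route (equivalent to the epigraph-separation argument you sketch at the end).

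The one step whose justification does not hold up is the finiteness of $h$. The claim that $\langle\ell,\cdot\rangle$ remains bounded above on $[f\le\alpha+v]$ because all sublevel sets ``share the same recession directions'' is not a valid inference: a linear functional can be nonpositive on the recession cone of a convex set and still be unbounded above on it (take $\ell(x,y)=x$ on the epigraph of $y=x^2$), and in a general topological vector space recession arguments are even less reliable. Fortunately the conclusion follows from machinery you have already set up. Indeed, $h$ is concave and nondecreasing with $h(0)=0$, and the Slater point $x_0$ gives $v_0:=f(x_0)-\alpha<0$ with $h(v_0)$ finite, since $h(v_0)\ge\langle\ell,x_0-\bar x\rangle>-\infty$ and $h(v_0)\le h(0)=0$. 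If $h(v_1)=+\infty$ for some $v_1>0$, write $0=\theta v_0+(1-\theta)v_1$ with $\theta\in(0,1)$; concavity then forces $h(0)=+\infty$, a contradiction, while for $v_1\le 0$ monotonicity gives $h(v_1)\le 0$. Hence $h<+\infty$ everywhere and $0$ is interior to $\{h>-\infty\}$, so the supergradient $t$ exists as you assert. With that repair (or with the Hahn--Banach separation of $\mathrm{epi}(f-\alpha)$ in $X\times\mathbb{R}$ you propose as the safer alternative), the proof is complete. One last small point: your argument that $t>0$ uses that a nonzero continuous linear functional cannot be $\le 0$ on all of $X$; this is fine provided $f$ is finite-valued, which is what the paper's own use of the proposition (where $\mathrm{int}[f\le\alpha]=[f<\alpha]$) implicitly assumes.
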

Now we are ready to prove the main result of this section.
		\begin{theorem}
			\label{sub}
			Let $X$ be a topological vector space. Let $f,g:X\rightarrow \bar{\mathbb{R}}$ be proper continuous and convex functions  and $\alpha \in \mathbb{R}$. Assume that $f$ and $g$ have the intersection property on $X$ at the level $\alpha$
			and
			$$
			[f\leq\alpha]\cap [g\leq \alpha]\neq\emptyset.
			$$
			Then there exist $x_{1} \in \textnormal{dom}(f)$, $x_{2}\in \textnormal{dom}(g)$ and
			 $\varphi_{1} \in \partial_{\Phi_{conv}} f(x_{1})$, $\varphi_{2} \in \partial_{\Phi_{conv}} g(x_{2})$ such that $\varphi_{1},\varphi_{2}$  have the  intersection property on $X$ at the level $\alpha$.
		\end{theorem}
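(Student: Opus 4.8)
The plan is to separate the two disjoint open convex strict sublevel sets and then realize the separating functional as genuine subgradients of $f$ and $g$ by means of Proposition \ref{norm}.

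First I would dispose of the degenerate cases. If $[f<\alpha]=\emptyset$, then $f\ge\alpha$ on $X$; since the common point $\bar x\in[f\le\alpha]\cap[g\le\alpha]$ satisfies $f(\bar x)\le\alpha$, it is in fact a global minimizer of $f$ with $f(\bar x)=\alpha$, so $0\in\partial f(\bar x)$ and the constant function $\varphi_1\equiv\alpha$ lies in $\partial_{\Phi_{conv}}f(\bar x)$. Then $[\varphi_1<\alpha]=\emptyset$, and the intersection property holds trivially against any $\varphi_2\in\partial_{\Phi_{conv}}g(x_2)$ (such a subgradient exists at any $x_2\in\textnormal{dom}(g)$, since $g$ is continuous and convex). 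The case $[g<\alpha]=\emptyset$ is symmetric. Hence I may assume both $[f<\alpha]\neq\emptyset$ and $[g<\alpha]\neq\emptyset$.

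Next I would pin down the common point. Since $f,g$ are continuous and convex with nonempty strict sublevel sets, one has $[f\le\alpha]=\overline{[f<\alpha]}$ and $[g\le\alpha]=\overline{[g<\alpha]}$ (any point where the value equals $\alpha$ is approached along a segment issuing from a point of strictly smaller value). From $\bar x\in[f\le\alpha]\cap[g\le\alpha]$ and the intersection property $[f<\alpha]\cap[g<\alpha]=\emptyset$, the values $f(\bar x),g(\bar x)$ cannot both be $<\alpha$; I then rule out the mixed situation $f(\bar x)=\alpha$, $g(\bar x)<\alpha$ (and its symmetric counterpart). Applying the Hahn--Banach separation theorem to the disjoint open convex sets $[f<\alpha]$ and $[g<\alpha]$ produces $\ell\in X^{*}\setminus\{0\}$ and $\beta\in\mathbb R$ with $\ell\le\beta$ on $[f<\alpha]$ and $\ell\ge\beta$ on $[g<\alpha]$; passing to closures gives $\ell(\bar x)\le\beta$, while $g(\bar x)<\alpha$ forces $\ell(\bar x)\ge\beta$, so $\ell(\bar x)=\beta$ and $\bar x$ would minimize $\ell$ over $[g\le\alpha]$ at an \emph{interior} point, forcing $\ell=0$, a contradiction. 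Therefore necessarily $f(\bar x)=g(\bar x)=\alpha$.

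Now comes the core construction. With $\ell,\beta$ as above and $\ell(\bar x)=\beta$, the closure statements give $\ell(x)\le\beta=\ell(\bar x)$ for all $x\in[f\le\alpha]$ and $\ell(x)\ge\beta=\ell(\bar x)$ for all $x\in[g\le\alpha]$, that is $\ell\in N_{[f\le\alpha]}(\bar x)$ and $-\ell\in N_{[g\le\alpha]}(\bar x)$. Since $f(\bar x)=g(\bar x)=\alpha$ and both strict sublevel sets are nonempty, Proposition \ref{norm} applies and yields $\ell=\lambda_{1}\ell_{1}$ and $-\ell=\lambda_{2}\ell_{2}$ with $\lambda_{1},\lambda_{2}\ge0$, $\ell_{1}\in\partial f(\bar x)$, $\ell_{2}\in\partial g(\bar x)$; as $\ell\neq0$ we get $\lambda_{1},\lambda_{2}>0$, so $\ell_{1}$ is a positive multiple of $\ell$ and $\ell_{2}$ a negative multiple of $\ell$. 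Using the correspondence $\varphi=\ell'+c\in\partial_{\Phi_{conv}}h(\bar x)\iff\ell'\in\partial h(\bar x)$ together with Proposition \ref{prop_subdiff} (the constant being fixed by $\varphi(\bar x)=h(\bar x)=\alpha$), I take $x_{1}=x_{2}=\bar x$, $\varphi_{1}=\ell_{1}+(\alpha-\ell_{1}(\bar x))$ and $\varphi_{2}=\ell_{2}+(\alpha-\ell_{2}(\bar x))$. A direct computation then gives $[\varphi_{1}<\alpha]=\{x:\ell(x)<\beta\}$ and $[\varphi_{2}<\alpha]=\{x:\ell(x)>\beta\}$, which are disjoint, establishing the intersection property on $X$ at the level $\alpha$.

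The main obstacle is conceptual rather than computational: in a general topological vector space a continuous linear functional need not attain its extremum over a closed convex sublevel set, so one cannot in general locate a boundary point at which the separating functional is normal. The device that circumvents this is precisely the hypothesis $[f\le\alpha]\cap[g\le\alpha]\neq\emptyset$, which hands us the point $\bar x$ for free and, via the interior-point argument above, forces $\bar x$ to be the common contact point $f(\bar x)=g(\bar x)=\alpha$ at which both normality conditions hold simultaneously. Checking that the mixed case is genuinely impossible is the subtle step that makes the reduction to $x_{1}=x_{2}=\bar x$ legitimate.
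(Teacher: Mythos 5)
Your proposal is correct and follows essentially the same route as the paper: dispose of the degenerate cases $[f<\alpha]=\emptyset$ or $[g<\alpha]=\emptyset$ via the constant subgradient, separate the disjoint convex strict sublevel sets to obtain a nonzero $\ell$ with $\ell(\bar x)$ equal to the separating value, show $f(\bar x)=g(\bar x)=\alpha$, and use Proposition \ref{norm} to convert $\ell$ and $-\ell$ into genuine subgradients at $\bar x$ whose affine representatives have complementary strict sublevel half-spaces. The only (harmless) cosmetic differences are that you separate the open sets $[f<\alpha]$, $[g<\alpha]$ and pass to closures, and that you rule out the mixed case $f(\bar x)=\alpha$, $g(\bar x)<\alpha$ via an interior-point argument, where the paper derives $f(\bar x)=g(\bar x)=\alpha$ directly from $[f<\alpha]\cap[g\le\alpha]=\emptyset$.
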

		\begin{proof}
			 Assume, first that $[f<\alpha]=\emptyset$, i.e. 
				$$
				f(x)\geq \alpha \ \ \ \ \text{for all} \ \ \ \ x\in X.
				$$
				By the assumption that $[f\leq\alpha]\cap [g\leq \alpha]\neq\emptyset$, there exists $x_{1}\in \textnormal{dom}(f)$ such that $f(x_{1})=\alpha$, and thus  $\bar{\varphi}\equiv \alpha$  belongs to $\partial_{\Phi_{conv}} f(x_{1})$. Let $x_{2}\in \textnormal{dom}(\partial_{\Phi_{conv}}g) $. Then  
				$$
				[\bar{\varphi}<\alpha]\cap [\varphi<\alpha]=\emptyset \ \ \ \text{for all} \ \ \ \varphi \in \partial_{\Phi_{conv}} g(x_{2}),
				$$
					which gives the assertion of the theorem.
					Analogously, by assuming that $[g<\alpha]=\emptyset$ we  get the assertion of the theorem.
			
			Assume now that $[f<\alpha]\neq \emptyset$ and $[g<\alpha]\neq\emptyset$.
			By the  intersection property of $f$ and $g$ on $X$ at the level $\alpha$, we have $[f<\alpha]\cap [g<\alpha]=\emptyset$.
			
		 We show that there exists $\ell\in X^{*}\setminus \{0\}$ such that
				\begin{equation}
				\label{phi}
				\sup_{y\in[f\leq\alpha]}\left\langle \ell,y\right\rangle= \left\langle \ell,\bar{x}\right\rangle= \inf_{z\in [g\leq\alpha]}\left\langle \ell,z\right\rangle,
				\end{equation}
				where $\bar{x}\in	[f\leq\alpha]\cap [g\leq \alpha] $.
				
				By the convexity and continuity of functions $f$, $g$, and by the fact that $[f<\alpha]\neq \emptyset$, $ [g< \alpha]\neq\emptyset$ we get $[f<\alpha]\cap [g\leq \alpha]=\emptyset$, $\text{int}[f\leq\alpha]=[f<\alpha]$ and $f(\bar{x})=g(\bar{x})=\alpha$. By the separation theorem (\cite{schirotzek},Theorem 1.5.3), there exist $\ell\in X^{*}\setminus \{0\}$ and $r\in\mathbb{R}$ such that
				$$
				\left\langle \ell,y\right\rangle \leq r\leq \left\langle \ell,z\right\rangle \ \ \ \text{for all} \ \ \  y\in[f\leq\alpha], \ z\in [g\leq\alpha].
				$$
				Let $\bar{x}\in	[f\leq\alpha]\cap [g\leq \alpha] $. 
				On the one hand
				$$
					\left\langle \ell,y\right\rangle\leq r\leq 	\left\langle \ell,\bar{x}\right\rangle \  \ \forall\ y\in[f\leq\alpha],
				$$
				hence $\sup\limits_{y\in[f\leq\alpha]}	\left\langle \ell,y\right\rangle\leq r\leq\left\langle \ell,\bar{x}\right\rangle.$
				On the other hand
				$$
				\left\langle \ell,\bar{x}\right\rangle\leq r\leq \left\langle \ell,z\right\rangle\ \ \forall z\in	[g\leq\alpha],
				$$
				thus $
				\left\langle \ell,\bar{x}\right\rangle\leq r\leq \inf\limits_{z\in [g\leq\alpha]}\left\langle \ell,z\right\rangle.$
				Then
				$$
				\left\langle \ell,\bar{x}\right\rangle\leq 	\sup_{y\in[f\leq\alpha]}\left\langle \ell,y\right\rangle\leq r\leq \left\langle \ell,\bar{x}\right\rangle\leq r\leq \inf_{z\in [g\leq\alpha]}\left\langle \ell,z\right\rangle\leq \left\langle \ell,\bar{x}\right\rangle,
				$$
				which proves \eqref{phi}, and consequently
			
			$$
			\sup_{y\in[f\leq\alpha]}\left\langle \ell,y\right\rangle=\left\langle \ell,\bar{x}\right\rangle.
			$$
			Hence
			$$
		\left\langle \ell,\bar{x}\right\rangle\geq \left\langle \ell,y\right\rangle \ \ \text{for all } \ \ y\in[f\leq\alpha],
			$$
		or equivalently
			$$
			\left\langle \ell,\bar{x}-y\right\rangle\geq 0 \ \ \text{for all } \ \ y\in[f\leq\alpha].
			$$
			By Proposition \ref{norm}
			$$
			\ell \in N_{[f\leq\alpha]}(\bar{x})=\mathbb{R}_{+}\partial f(\bar{x}).
			$$
			Hence, there exists $k>0$ such that $\ell \in \partial k f(\bar{x})$, i.e.
			$$
			f(x)\geq \frac{1}{k} \left\langle \ell,x-\bar{x}\right\rangle+f(\bar{x}), \ \ \text{for all} \ \ x\in X.
			$$
			Let $\varphi_{1}(x):= \frac{1}{k} \left\langle \ell,x-\bar{x}\right\rangle+f(\bar{x})$. Then $\varphi_{1}\in\partial_{\Phi_{conv}} f(\bar{x}) $ and
			$\varphi_{1}\in \text{supp}(f)$ and
			$$
			[\varphi_{1}<\alpha]=\{x\in X\ : \ \left\langle \ell,x-\bar{x}\right\rangle<0\}.
			$$
			By \eqref{phi},
			$$
			\left\langle \ell,\bar{x}\right\rangle=\inf_{z\in [g\leq\alpha]}\left\langle \ell,z\right\rangle,
			$$
			which is equivalent to 
			$$
			-\left\langle \ell,\bar{x}\right\rangle=\sup_{z\in [g\leq\alpha]}-\left\langle \ell,z\right\rangle,
			$$
			i.e.
			$$
			-\left\langle \ell,\bar{x}\right\rangle\geq -\left\langle \ell,z\right\rangle \ \ \text{for all } \ \ z\in[g\leq\alpha],
			$$
		or
			$$
			-\left\langle \ell,\bar{x}-z\right\rangle\geq 0 \ \ \text{for all } \ \ z\in[g\leq\alpha].
			$$
			Again, by Proposition \ref{norm}
			$$
			-\ell \in N_{[g\leq\alpha]}(\bar{x})=\mathbb{R}_{+}\partial g(\bar{x}).
			$$
			There exists $\lambda>0$ such that $-\ell \in \partial \lambda g(\bar{x})$. We have
			$$
			g(x)\geq -\frac{1}{\lambda} \left\langle \ell,x-\bar{x}\right\rangle+g(\bar{x}), \ \ \text{for all} \ \ x\in X.
			$$
			Let $\varphi_{2}(x):=  -\frac{1}{\lambda} \left\langle \ell,x-\bar{x}\right\rangle+g(\bar{x})$. Then $\varphi_{2}\in\partial_{\Phi_{conv}} g(\bar{x}) $ and
			$\varphi_{2}\in \text{supp}(g)$ and
			$$
			[\varphi_{2}<\alpha]=\{x\in X \ : \ \left\langle \ell,x-\bar{x}\right\rangle >0\}.
			$$
			Thus
			$$
			[\varphi_{1}<\alpha]\cap [\varphi_{2}<\alpha]=\emptyset.
			$$
		\end{proof}	
		
		The following simple fact was proved in \cite{syga}.
		\begin{proposition}(\cite{syga}, Proposition 2.1)
			\label{leq}
			If $\varphi_{1},\varphi_{2}$ have the intersection property on $X$ at the level $\alpha\in \mathbb{R}$,
			then $\varphi_{1},\varphi_{2}$ have the intersection property on $X$ at 
			any level $\beta \leq \alpha$, $\beta\in \mathbb{R}$.
		\end{proposition}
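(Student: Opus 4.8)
The plan is to reduce the statement to a single monotonicity observation about strict lower level sets: as the threshold decreases, these sets can only shrink, so the hypothesis that the intersection is empty at level $\alpha$ propagates automatically to every lower level $\beta\le\alpha$.

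First I would fix $\beta\in\mathbb{R}$ with $\beta\le\alpha$ and verify, for an arbitrary function $\varphi:X\rightarrow\mathbb{R}$, the inclusion $[\varphi<\beta]\subseteq[\varphi<\alpha]$. This is immediate from the definition of the strict lower level set: if $x\in[\varphi<\beta]$ then $\varphi(x)<\beta$, and since $\beta\le\alpha$ we obtain $\varphi(x)<\alpha$, that is, $x\in[\varphi<\alpha]$.

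Next I would apply this inclusion to both $\varphi_1$ and $\varphi_2$ and intersect, obtaining
$$
[\varphi_1<\beta]\cap[\varphi_2<\beta]\subseteq[\varphi_1<\alpha]\cap[\varphi_2<\alpha].
$$
By hypothesis the intersection property holds for $\varphi_1,\varphi_2$ on $X$ at the level $\alpha$, so the right-hand side is empty; hence the left-hand side is empty as well. By the definition of the intersection property, this is precisely the assertion that $\varphi_1,\varphi_2$ have the intersection property on $X$ at the level $\beta$, which completes the argument.

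There is no genuine obstacle here: the whole statement is a direct consequence of the elementary set inclusion above, and the only point that requires any care is keeping track of the direction of the inequality between the thresholds $\beta$ and $\alpha$ so that the level sets are nested in the correct way.
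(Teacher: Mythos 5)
Your proof is correct: the monotonicity of strict lower level sets, $[\varphi<\beta]\subseteq[\varphi<\alpha]$ for $\beta\le\alpha$, immediately transfers the empty intersection from level $\alpha$ down to level $\beta$. The paper states this proposition without proof, citing Proposition 2.1 of an earlier reference, and your argument is precisely the standard one-line justification one would give there.
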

		Taking into account that above proposition we can formulate the following corollary of Theorem \ref{sub}.
\begin{corollary}
		Let $f,g:X\rightarrow \bar{\mathbb{R}}$ be proper continuous and convex  and $\alpha \in \mathbb{R}$. If $f$ and $g$ have the intersection property on $X$ at the level $\alpha$
		and
		$$
		[f\leq\alpha]\cap [g\leq \alpha]\neq\emptyset
		$$
			then there exists $x_{1} \in \textnormal{dom}(f)$, $x_{2}\in \textnormal{dom}(g)$ and
			$\varphi_{1} \in \partial_{\Phi_{conv}} f(x_{1})$, $\varphi_{2} \in \partial_{\Phi_{conv}} g(x_{2})$ such that $\varphi_{1},\varphi_{2}$  have the intersection property on $X$ at every level $\beta\leq \alpha$.
	\end{corollary}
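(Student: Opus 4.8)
The plan is to derive the Corollary directly by combining Theorem \ref{sub} with Proposition \ref{leq}; no new argument is required, because the hypotheses of the Corollary coincide verbatim with those of Theorem \ref{sub}.

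First I would invoke Theorem \ref{sub}. The Corollary assumes precisely that $f,g$ are proper, continuous and convex, that $f$ and $g$ have the intersection property on $X$ at the level $\alpha$, and that $[f\leq\alpha]\cap[g\leq\alpha]\neq\emptyset$. These are exactly the hypotheses of Theorem \ref{sub}, so the theorem supplies points $x_{1}\in\textnormal{dom}(f)$, $x_{2}\in\textnormal{dom}(g)$ and subgradients $\varphi_{1}\in\partial_{\Phi_{conv}}f(x_{1})$, $\varphi_{2}\in\partial_{\Phi_{conv}}g(x_{2})$ for which $\varphi_{1},\varphi_{2}$ have the intersection property on $X$ at the level $\alpha$.

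Next I would upgrade the single level $\alpha$ to all levels $\beta\leq\alpha$ by applying Proposition \ref{leq} to the very pair $\varphi_{1},\varphi_{2}$ just produced. Since by construction $\varphi_{1},\varphi_{2}$ have the intersection property on $X$ at the level $\alpha$, Proposition \ref{leq} immediately gives that they have the intersection property on $X$ at any level $\beta\leq\alpha$, $\beta\in\mathbb{R}$. The same data $x_{1},x_{2},\varphi_{1},\varphi_{2}$ therefore witness the conclusion of the Corollary.

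There is essentially no obstacle here: all of the analytic content --- the separation of the sublevel sets, the identification of the separating functional with a positive multiple of a subgradient through the normal-cone formula of Proposition \ref{norm}, and the verification that the resulting affine supports have disjoint strict lower level sets --- has already been carried out inside the proof of Theorem \ref{sub}, while Proposition \ref{leq} is a one-line monotonicity statement about the level parameter. The only point worth checking explicitly is the order of quantifiers, namely that a single choice of $x_{1},x_{2},\varphi_{1},\varphi_{2}$ serves simultaneously for every $\beta\leq\alpha$; this is automatic, since Proposition \ref{leq} keeps the functions $\varphi_{1},\varphi_{2}$ fixed and lets only $\beta$ vary.
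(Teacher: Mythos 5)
Your proposal is correct and matches the paper exactly: the corollary is stated there as an immediate consequence of Theorem \ref{sub} combined with Proposition \ref{leq}, applied to the very pair $\varphi_{1},\varphi_{2}$ produced by the theorem. Nothing further is needed.
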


	\section{Main results}
	
	In this section we prove minimax theorems for functions
	 $a:X\times Y\rightarrow \bar{\mathbb{R}}$ which are $\Phi_{lsc}$-($\Phi_{conv}$-)convex
	 with respect to variable $x$ for every $y\in Y$. In these theorems
	  we do not use any compactness and/or connectedness assumptions.
	  
	  For $\Phi_{lsc}$-functions we prove the following sufficient conditions for the minimax equality to hold.
	  
\begin{theorem}
			\label{min-max-lsc-n}
			Let $X$ be a  Hilbert space and $Y$ be a real vector space. Let $a:X\times Y\rightarrow\bar{\mathbb{R}}$. Assume that for any $y\in Y$ the  function $a(\cdot,y):X\rightarrow\bar{\mathbb{R}}$ 
			is proper $\Phi_{lsc}$-convex on $X$ and for any $x\in X$ the function $a(x,\cdot):Y\rightarrow\bar{\mathbb{R}}$  is concave on $Y$. 
			
			If for every $\alpha\in\mathbb{R}$, $\alpha < \inf\limits_{x\in X} \sup\limits_{y\in Y} a(x,y)$ there exist $y_{1}, y_{2}\in Y$,  $x_{1} \in \textnormal{dom}(a(\cdot, y_{1}))$, $x_{2}\in \textnormal{dom}(a(\cdot, y_{2}))$ and $\varphi_{1}\in \partial_{\Phi_{lsc}} a(\cdot, y_{1})(x_{1})$, $\varphi_{2}\in \partial_{\Phi_{lsc}}a(\cdot, y_{2})(x_{2})$ such that the intersection property holds for $\varphi_{1}$ and $\varphi_{2}$ on $X$ at the level $\alpha$, 
			then 
			$$\sup\limits_{y\in Y} \inf\limits_{x\in X} a(x,y)=\inf\limits_{x\in X} \sup\limits_{y\in Y} a(x,y).$$
			
		\end{theorem}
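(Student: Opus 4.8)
The plan is to obtain the minimax equality as a direct consequence of the equivalence in Theorem~\ref{min-max}, invoked with the class $\Phi=\Phi_{lsc}$. First I would check that Theorem~\ref{min-max} is applicable: $Y$ is a real vector space, each $a(\cdot,y)$ is proper $\Phi_{lsc}$-convex, each $a(x,\cdot)$ is concave, and $\Phi_{lsc}$ is closed under vertical shift (adding a constant $c$ to $-a\|\cdot\|^2+\langle\ell,\cdot\rangle+c$ leaves the function in $\Phi_{lsc}$), so all the standing hypotheses of Theorem~\ref{min-max} are met.

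The essential point is that a $\Phi_{lsc}$-subgradient is automatically an element of the corresponding support set. Fix $\alpha\in\mathbb{R}$ with $\alpha<\inf_{x\in X}\sup_{y\in Y}a(x,y)$ and let $y_1,y_2\in Y$, $x_1\in\textnormal{dom}(a(\cdot,y_1))$, $x_2\in\textnormal{dom}(a(\cdot,y_2))$, $\varphi_1\in\partial_{\Phi_{lsc}}a(\cdot,y_1)(x_1)$ and $\varphi_2\in\partial_{\Phi_{lsc}}a(\cdot,y_2)(x_2)$ be supplied by the hypothesis, with $\varphi_1,\varphi_2$ having the intersection property on $X$ at the level $\alpha$. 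Applying Proposition~\ref{prop_subdiff} with $\varepsilon=0$ to the proper $\Phi_{lsc}$-convex function $a(\cdot,y_1)$ at $x_1$ gives
$$
\partial_{\Phi_{lsc}}a(\cdot,y_1)(x_1)=\{\varphi\in\textnormal{supp}(a(\cdot,y_1)):a(x_1,y_1)=\varphi(x_1)\}\subseteq\textnormal{supp}(a(\cdot,y_1)),
$$
and similarly $\partial_{\Phi_{lsc}}a(\cdot,y_2)(x_2)\subseteq\textnormal{supp}(a(\cdot,y_2))$. Hence $\varphi_1\in\textnormal{supp}(a(\cdot,y_1))$ and $\varphi_2\in\textnormal{supp}(a(\cdot,y_2))$.

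Therefore, for every $\alpha<\inf_{x\in X}\sup_{y\in Y}a(x,y)$ there exist $y_1,y_2\in Y$ and functions $\varphi_1\in\textnormal{supp}(a(\cdot,y_1))$, $\varphi_2\in\textnormal{supp}(a(\cdot,y_2))$ having the intersection property on $X$ at the level $\alpha$; this is exactly condition~(i) of Theorem~\ref{min-max}. The implication $(i)\Rightarrow(ii)$ of that theorem then delivers the minimax equality, completing the proof. I do not expect any genuine obstacle here: all the analytic substance has already been placed in Theorem~\ref{min-max} and Proposition~\ref{prop_subdiff}, and the only thing to notice is that requiring $\Phi_{lsc}$-subgradients (rather than arbitrary support functions) is a strengthening of condition~(i), which is precisely why the present result is stated as a sufficient condition only.
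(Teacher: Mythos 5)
Your proposal is correct and follows exactly the paper's route: the paper's own proof is the one-line observation that the result ``follows immediately from Proposition~\ref{prop_subdiff} and Theorem~\ref{min-max}'', and your argument simply spells out that $\partial_{\Phi_{lsc}}a(\cdot,y_i)(x_i)\subseteq\textnormal{supp}(a(\cdot,y_i))$ via Proposition~\ref{prop_subdiff} with $\varepsilon=0$ before invoking the implication $(i)\Rightarrow(ii)$ of Theorem~\ref{min-max}. No issues.
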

		\begin{proof}
			 Follows immediately from Proposition \ref{prop_subdiff} and
			Theorem \ref{min-max}.
			\end{proof}
			
			\begin{remark}
				Let us observe that if for some $y_{1}, y_{2}\in Y$
				and  $x_{1} \in \textnormal{dom}(a(\cdot, y_{1}))$, $x_{2}\in \textnormal{dom}(a(\cdot, y_{2}))$ and $\varphi_{1}(\cdot)=-a_{1}\|\cdot\|^2+\left\langle \ell_{1},\cdot\right\rangle+c_{1}\in \partial_{\Phi_{lsc}} a(\cdot, y_{1})(x_{1})$, $\varphi_{2}(\cdot)=-a_{2}\|\cdot\|^2+\left\langle \ell_{2},\cdot\right\rangle+c_{2}\in \partial_{\Phi_{lsc}}a(\cdot, y_{2})(x_{2})$ the intersection property holds for $\varphi_{1}$ and $\varphi_{2}$ on $X$ at the level $\alpha$, 
				then exactly one of the conditions hold
				\begin{description}
				 \item [{\em (i)}] $\varphi_{1}:=const\ge\alpha$ or    $\varphi_{2}:=const\ge\alpha$,
				 \item [{\em (ii)}] $\varphi_{1}:=\langle\ell_{1},\cdot\rangle+c_{1}$ and
					$\varphi_{2}:=\langle\ell_{2},\cdot\rangle+c_{2}$, i.e. $a_{1}=a_{2}=0$.
				\end{description}
				\end{remark}
			To show the necessity of the intersection property  of Theorem \ref{min-max-lsc-n}  we restrict our attention to bounded subsets of the Hilbert space $X$.
			
			\begin{theorem}
				\label{min-max-lsc-m2}
				Let $X$ be a  Hilbert space, $Y$ be a real vector space and let $a:X\times Y\rightarrow\bar{\mathbb{R}}$. Assume  that for any $y\in Y$ the  function $a(\cdot,y):X\rightarrow\bar{\mathbb{R}}$ 
				is proper $\Phi_{lsc}$-convex on $X$. 
				
				If $\sup\limits_{y\in Y} \inf\limits_{x\in X} a(x,y)=\inf\limits_{x\in X} \sup\limits_{y\in Y} a(x,y)$, then  for every $\alpha\in\mathbb{R}$, $\alpha < \inf\limits_{x\in X} \sup\limits_{y\in Y} a(x,y)$, and every bounded set $Z\subset X$ 
				 there exist $y_{1}, y_{2}\in Y$,  $x_{1} \in \textnormal{dom}(a(\cdot, y_{1}))$, $x_{2}\in \textnormal{dom}(a(\cdot, y_{2}))$ and $\varphi_{1}\in \partial_{\Phi_{lsc}} a(\cdot, y_{1})(x_{1})$, $\varphi_{2}\in \partial_{\Phi_{lsc}}a(\cdot, y_{2})(x_{2})$ such that the intersection property holds for $\varphi_{1}$ and $\varphi_{2}$ on $Z$ at the level $\alpha$,
			\end{theorem}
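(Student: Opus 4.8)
The plan is to deduce the statement directly from Theorem \ref{th-subdif}, keeping careful track of the downward level shift by $\eta$ that this theorem introduces. Since the present theorem does \emph{not} assume concavity of $a(x,\cdot)$, I would not try to quote the equivalence in Theorem \ref{min-max}; instead I would establish by hand the existence of support functions enjoying the intersection property, using only the minimax equality together with the $\Phi_{lsc}$-convexity of the partial functions $a(\cdot,y)$.

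First I would set $v := \inf_{x\in X}\sup_{y\in Y} a(x,y) = \sup_{y\in Y}\inf_{x\in X} a(x,y)$, the common value provided by the hypothesis, fix $\alpha < v$ and a bounded set $Z\subset X$, and choose $\eta>0$ so small that $\alpha+\eta < v$ (any $\eta>0$ works when $v=+\infty$). Because $\alpha+\eta < \sup_{y\in Y}\inf_{x\in X} a(x,y)$, there is $y_{0}\in Y$ with $\inf_{x\in X} a(x,y_{0}) > \alpha+\eta$. Writing $c:=\inf_{x\in X} a(x,y_{0})$, properness of $a(\cdot,y_{0})$ forces $c<+\infty$, while $c>\alpha+\eta$ gives $c\in\mathbb{R}$.

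The key observation is that the constant function $\varphi_{0}\equiv c$ belongs to $\Phi_{lsc}$ (with vanishing quadratic and linear parts) and satisfies $\varphi_{0}\le a(\cdot,y_{0})$, so that $\varphi_{0}\in\textnormal{supp}\,a(\cdot,y_{0})$; since $c>\alpha+\eta$ we have $[\varphi_{0}<\alpha+\eta]=\emptyset$. Hence $\varphi_{0},\varphi_{0}$ trivially have the intersection property on $X$ at the level $\alpha+\eta$. This is exactly the point at which the argument sidesteps concavity: the easy (weak-duality) half of the minimax principle already furnishes a \emph{constant} minorant at any level strictly below $v$, and a constant minorant always yields the intersection property at that level.

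Finally I would apply Theorem \ref{th-subdif} with $f=g=a(\cdot,y_{0})$, the support functions $\varphi_{0},\varphi_{0}$, the level $\alpha+\eta$, and the same $\eta$. It produces points $x_{1},x_{2}\in\textnormal{dom}(a(\cdot,y_{0}))$ and subgradients $\varphi_{1}\in\partial_{\Phi_{lsc}} a(\cdot,y_{0})(x_{1})$ and $\varphi_{2}\in\partial_{\Phi_{lsc}} a(\cdot,y_{0})(x_{2})$ whose intersection property on $Z$ holds at the level $(\alpha+\eta)-\eta=\alpha$. Taking $y_{1}=y_{2}=y_{0}$ then gives the assertion. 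I expect the only genuine subtlety to be this bookkeeping of the level: one must launch the construction at $\alpha+\eta$ precisely so that the shift built into Theorem \ref{th-subdif} lands it back at $\alpha$, and one must notice that the constant-minorant trick removes any need for a concavity assumption; the remaining steps are routine.
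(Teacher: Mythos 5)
Your proposal is correct and follows essentially the same route as the paper: both exploit the easy half of the minimax equality to produce a constant minorant of some $a(\cdot,y_{0})$ lying above a level $\alpha+\eta<v$ (the paper writes this level as $\beta$), obtain the intersection property trivially there from the empty strict sublevel set, and then invoke Theorem \ref{th-subdif} to descend by exactly $\eta$ to the level $\alpha$ on the bounded set $Z$. The only cosmetic differences are that you take $f=g=a(\cdot,y_{0})$ with the constant $c=\inf_{x}a(x,y_{0})$, while the paper pairs the constant minorant with an arbitrary support function of a second $a(\cdot,y_{2})$.
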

	
			\begin{proof}
		 Take any $	\alpha < \inf\limits_{x\in X} \sup\limits_{y\in Y} a(x,y)$.
			By the minimax equality,
			$\sup\limits_{y\in Y} \inf\limits_{x\in X} a(x,y)>\alpha$, i.e. there exists $\bar{y}\in Y$ such that for every $x\in X$ we have
			$$
			a(x,\bar{y})> \alpha.
			$$
			The function
			$\bar{\varphi}:= \alpha$ belongs to  $\text{supp} \ a(\cdot,\bar{y})$. We have  $[\bar{\varphi}<\alpha]=\emptyset$,
			thus
			$$
			[\varphi<\alpha]\cap [\bar{\varphi}<\alpha]=\emptyset \ \ \text{for all} \ \  \varphi \in \Phi_{lsc},
			$$
			i.e. for every $\varphi \in \Phi_{lsc}$ the functions $\bar{\varphi}$ and $\varphi$ have the intersection property on $X$ at the level $\alpha$.
			
			Take any $\inf\limits_{x\in X} \sup\limits_{y\in Y} a(x,y)> \beta>\alpha$ and set $\eta:=\beta-\alpha>0$. By the same arguments as above there exist $y_{1},y_{2}\in Y$, $\varphi_{1}\in \text{supp}\ a(\cdot,y_{1})$ and  $\varphi_{2}\in \text{supp}\ a(\cdot,y_{2})$ such that the  intersection property holds for  functions $\varphi_{1}$ and $\varphi_{2}$ on $X$ at the level $\beta$. Let $Z$ be a bounded subset of $X$. By Theorem \ref{th-subdif}, there exist $x_{1} \in \textnormal{dom}(a(\cdot, y_{1}))$, $x_{2}\in \textnormal{dom}(a(\cdot, y_{2}))$  and $\bar{\varphi_{1}}\in \partial_{\Phi_{lsc}} a(\cdot, y_{1})(x_{1})$, $\bar{\varphi_{2}}\in \partial_{\Phi_{lsc}}a(\cdot, y_{2})(x_{2})$ such that the intersection property holds for $\bar{\varphi_{1}}$ and $\bar{\varphi_{2}}$ on $Z$ at the level $\beta-\eta$, then the
			intersection property holds for $\bar{\varphi_{1}}$ and $\bar{\varphi_{2}}$ on $Z$ at the level $\beta-(\beta-\alpha)=\alpha$.  
		\end{proof}
		
	In the class of $\Phi_{conv}$-convex functions we get the following sufficient and 
	necessary conditions for the minimax equality.

	\begin{theorem}
		\label{min-max-conv_n}
		Let $X$ be a reflexive Banach space and $Y$ be a real vector space. Let $a:X\times Y\rightarrow\bar{\mathbb{R}}$ be a function such that for any $y\in Y$ the  function $a(\cdot,y):X\rightarrow\mathbb{R}$ 
		is proper continuous and convex on $X$ and for any $x\in X$ the function $a(x,\cdot):Y\rightarrow\mathbb{R}$  is concave on $Y$. Assume that
		there exists a point $\bar{y}\in Y$ such that $\sup\limits_{y\in Y} \inf\limits_{x\in X} a(x,y)=\inf\limits_{x\in X} a(x,\bar{y})$ and there exists $\tilde{y}\in Y$ and $\gamma>\sup\limits_{y\in Y} \inf\limits_{x\in X} a(x,y)$ such that the set $[a(\cdot, \tilde{y})\leq \gamma]$ is bounded in $X$.
	 The following conditions are equivalent:
		\begin{description}
			\item [{\em (i)}] for every $\alpha\in\mathbb{R}$, $\alpha < \inf\limits_{x\in X} \sup\limits_{y\in Y} a(x,y)$ there exist $y_{1}, y_{2}\in Y$, $x_{1} \in \textnormal{dom}(a(\cdot, y_{1}))$, $x_{2}\in \textnormal{dom}(a(\cdot, y_{2}))$ and $\varphi_{1}\in \partial_{\Phi_{conv}} a(\cdot, y_{1})(x_{1})$, $\varphi_{2}\in \partial_{\Phi_{conv}}a(\cdot, y_{2})(x_{2})$ such that the intersection property holds for $\varphi_{1}$ and $\varphi_{2}$ on $X$ at the level $\alpha$,
			\item [{\em (ii)}] $\sup\limits_{y\in Y} \inf\limits_{x\in X} a(x,y)=\inf\limits_{x\in X} \sup\limits_{y\in Y} a(x,y).$
		\end{description}
	\end{theorem}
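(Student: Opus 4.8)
The plan is to prove the two implications separately: $(i)\Rightarrow(ii)$ is a direct application of Theorem~\ref{min-max}, while $(ii)\Rightarrow(i)$ rests on a variational (attainment) argument, which is where reflexivity and the boundedness hypothesis are really used.

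\emph{The implication $(i)\Rightarrow(ii)$.} Every $\Phi_{conv}$-subgradient is a support function: by Proposition~\ref{prop_subdiff} with $\varepsilon=0$ one has $\partial_{\Phi_{conv}} a(\cdot,y)(x)\subseteq\text{supp}\,a(\cdot,y)$. Since each $a(\cdot,y)$ is convex and continuous (hence $\Phi_{conv}$-convex) and each $a(x,\cdot)$ is concave, Theorem~\ref{min-max} applies with $\Phi=\Phi_{conv}$. Thus $(i)$ produces, for every $\alpha<\inf_x\sup_y a$, functions $\varphi_1\in\text{supp}\,a(\cdot,y_1)$ and $\varphi_2\in\text{supp}\,a(\cdot,y_2)$ with the intersection property at level $\alpha$, which is precisely condition $(i)$ of Theorem~\ref{min-max}; its equivalence with $(ii)$ gives the minimax equality.

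\emph{The implication $(ii)\Rightarrow(i)$.} Put $v:=\inf_x\sup_y a(x,y)=\sup_y\inf_x a(x,y)$ (equal by $(ii)$) and $h(x):=\sup_{y\in Y}a(x,y)$, which is convex and lower semicontinuous with $\inf_X h=v$. First I would show that $h$ attains its infimum. Because $h\ge a(\cdot,\tilde y)$, the sublevel set $[h\le\gamma]$ lies inside the bounded set $[a(\cdot,\tilde y)\le\gamma]$, so it is closed, convex and bounded, hence weakly compact since $X$ is reflexive; it is nonempty as $\gamma>v$. As a convex lsc function $h$ is weakly lsc, so it attains its minimum $v$ at some $x^{*}$, i.e. $\sup_y a(x^{*},y)=v$. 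Then $a(x^{*},\bar y)\le v$, while $\inf_x a(x,\bar y)=v$ forces $a(x^{*},\bar y)\ge v$; hence $a(x^{*},\bar y)=v=\inf_x a(\cdot,\bar y)$, so $x^{*}$ minimizes the convex function $a(\cdot,\bar y)$ and $0\in\partial a(\cdot,\bar y)(x^{*})$. Equivalently (Proposition~\ref{prop_subdiff}), the constant $\varphi_1\equiv v$ belongs to $\partial_{\Phi_{conv}} a(\cdot,\bar y)(x^{*})$. Choosing $y_1=\bar y$, $x_1=x^{*}$ and, say, $\varphi_2:=\varphi_1\in\partial_{\Phi_{conv}} a(\cdot,\bar y)(x^{*})$ (or any subgradient of any $a(\cdot,y_2)$, which exists by continuity and convexity), the bound $\alpha<v$ gives $[\varphi_1<\alpha]=\emptyset$, whence $[\varphi_1<\alpha]\cap[\varphi_2<\alpha]=\emptyset$. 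Thus the intersection property holds at level $\alpha$, and the same $x^{*}$ works simultaneously for all $\alpha<v$, proving $(i)$.

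The main obstacle is the attainment of the minimax value $v$: producing the minimizer $x^{*}$ is the crux, and it is exactly here that reflexivity and the boundedness of $[a(\cdot,\tilde y)\le\gamma]$ are indispensable, through weak compactness of a bounded closed convex set together with the weak lower semicontinuity of the convex lsc function $h$. I note that this argument reaches $(i)$ through the degenerate subgradient $\varphi_1\equiv v$ and so does not need a genuine separation; should a non-degenerate realization be desired, Theorem~\ref{sub} would instead supply the two subgradients from the intersection property of $a(\cdot,y_1),a(\cdot,y_2)$ at level $\alpha$ together with $[a(\cdot,y_1)\le\alpha]\cap[a(\cdot,y_2)\le\alpha]\neq\emptyset$.
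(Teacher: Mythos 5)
Your proof is correct. The implication $(i)\Rightarrow(ii)$ coincides with the paper's (Proposition \ref{prop_subdiff} together with Theorem \ref{min-max}). For $(ii)\Rightarrow(i)$ you take a genuinely different and more direct route. The paper first shows that $[a(\cdot,\tilde{y})\le\beta]\cap[a(\cdot,\bar{y})\le\beta]\neq\emptyset$, where $\beta$ is the common minimax value, by applying Cantor's intersection theorem to the nested closed convex bounded (hence, by reflexivity, weakly compact) sets $A_n=[a(\cdot,\tilde{y})\le\beta+\tfrac{1}{n}]\cap[a(\cdot,\bar{y})\le\beta+\tfrac{1}{n}]$; it then feeds this nonemptiness, together with the automatic intersection property of $a(\cdot,\bar{y})$ and $a(\cdot,\tilde{y})$ at level $\beta$ (automatic because $[a(\cdot,\bar{y})<\beta]=\emptyset$), into Theorem \ref{sub}, and finally descends to levels $\alpha<\beta$ via Proposition \ref{leq}. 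You instead minimize $h=\sup_{y}a(\cdot,y)$ over the weakly compact set $[h\le\gamma]\subseteq[a(\cdot,\tilde{y})\le\gamma]$, observe that the minimizer $x^{*}$ of $h$ must also minimize $a(\cdot,\bar{y})$, and exhibit the constant $\varphi_1\equiv v\in\partial_{\Phi_{conv}}a(\cdot,\bar{y})(x^{*})$, for which $[\varphi_1<\alpha]=\emptyset$. Both arguments exploit reflexivity and the boundedness of $[a(\cdot,\tilde{y})\le\gamma]$ in exactly the same way (weak compactness of a closed bounded convex set), and both ultimately produce the degenerate constant subgradient: in the paper's appeal to Theorem \ref{sub} the case $[f<\alpha]=\emptyset$ occurs, and that branch of its proof constructs precisely the constant subgradient at a minimizer of $a(\cdot,\bar{y})$ paired with an arbitrary subgradient of $a(\cdot,\tilde{y})$. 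What your version buys is a shorter, self-contained argument that bypasses Theorem \ref{sub} and the separation machinery altogether; what it gives up is nothing in this situation, since the separation step of Theorem \ref{sub} is never reached here anyway.
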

	\begin{proof}
		$(i)\Rightarrow (ii)$ follows immediately from Proposition \ref{prop_subdiff} and
		Theorem \ref{min-max}.
		
			$(ii)\Rightarrow (i)$ Let $\beta:=
			\inf\limits_{x\in X} \sup\limits_{y\in Y} a(x,y)= \sup\limits_{y\in Y} \inf\limits_{x\in X} a(x,y). $
			
			By assumption,  
			$$
			a(x,\bar{y})\geq \beta \ \  \ \text{for every} \ \  x\in X.
			$$
			Hence, for every $\hat{y}\in Y$  the functions $a(\cdot,\bar{y}),a(\cdot,\hat{y}):X\rightarrow\bar{\mathbb{R}}$ have the  intersection property on $X$ at the level $\beta$. Moreover, for every $\varepsilon>0$ there exists $x_{\varepsilon}\in X$ such that 
		 	 \begin{equation}
		 	 \label{inf}
		 	a(x_{\varepsilon},y)< \beta+\varepsilon \ \ \ \text{for every} \ \ \ y\in Y. 
		 	\end{equation}
		 	Let $\tilde{y}\in Y$ and $\tilde{\varepsilon}$ be such that the set $[a(\cdot, \tilde{y})\leq \beta+\tilde{\varepsilon }]$ is bounded.
		 	Consider the sets
		 	$$
		 	A_{n}:=[a(\cdot, \tilde{y})\leq \beta+\frac{1}{n}]\cap [a(\cdot, \bar{y})\leq \beta+\frac{1}{n}], \ \ \ \text{for all} \ \ n\in\mathbb{N}.
		 	$$
		 	The sets $A_{n}$ are nonempty (by \eqref{inf}), $A_{n+1}\subset A_{n}$, convex, closed and bounded for all $n\geq n_{0}$. By the Cantor's intersection theorem, there exists $\bar{x}\in X$ such that
		 	$$
		 	\bar{x}\in \bigcap_{n\geq n_{0}}^{\infty}A_{n},
		 	$$
		 	thus, $	\bar{x}\in[a(\cdot, \tilde{y})\leq \beta]\cap [a(\cdot, \bar{y})\leq \beta]$.
		 	Hence, by Theorem 	\ref{sub}, there exist $x_{1} \in \textnormal{dom}(a(\cdot, \bar{y}))$, $x_{2}\in \textnormal{dom}(a(\cdot, \tilde{y}))$ and $\varphi_{1}\in \partial_{\Phi_{conv}} a(\cdot,\bar{y} )(x_{1})$, $\varphi_{2}\in \partial_{\Phi_{conv}}a(\cdot,\tilde{y} )(x_{2})$ such that the intersection property holds for $\varphi_{1}$ and $\varphi_{2}$ on $X$ at the level $\beta$. 
		 	
		 	Let $\alpha < \inf\limits_{x\in X} \sup\limits_{y\in Y} a(x,y)=\beta$. By Proposition \ref{leq}, functions $\varphi_{1}$ and $\varphi_{2}$ have the intersection property on $X$ at the level $\alpha$.
	\end{proof}

	 Taking into account Theorem \ref{th-epsilon} we can formulate the following sufficient and 
	 necessary conditions for the minimax equality in terms of $\Phi$-$\varepsilon$-subgradients  for  $\Phi$-convex functions.
		\begin{theorem}
			\label{min-max-ep}
			Let $X$ be a set and $Y$ be a vector space. Let $a:X\times Y\rightarrow\bar{\mathbb{R}}$ be a function such that for any $y\in Y$ the  function $a(\cdot,y):X\rightarrow\mathbb{R}$ 
			is proper $\Phi$-convex on $X$ and for any $x\in X$ the function $a(x,\cdot):Y\rightarrow\mathbb{R}$  is concave on $Y$.  The following conditions are equivalent
			\begin{description}
				\item [{\em (i)}] for every $\alpha\in\mathbb{R}$, $\alpha < \inf\limits_{x\in X} \sup\limits_{y\in Y} a(x,y)$ and for every $\varepsilon>0$ there exist $y_{1}, y_{2}\in Y$, $x_{1} \in \textnormal{dom}(a(\cdot, y_{1}))$, $x_{2}\in \textnormal{dom}(a(\cdot, y_{2}))$ and $\varphi_{1}\in \partial_{\Phi}^{\varepsilon} a(\cdot, y_{1})(x_{1})$, $\varphi_{2}\in \partial_{\Phi}^{\varepsilon}a(\cdot, y_{2})(x_{2})$ such that the intersection property holds for $\varphi_{1}$ and $\varphi_{2}$ on $X$ at the level $\alpha$,
				\item [{\em (ii)}] $\sup\limits_{y\in Y} \inf\limits_{x\in X} a(x,y)=\inf\limits_{x\in X} \sup\limits_{y\in Y} a(x,y).$
			\end{description}
		\end{theorem}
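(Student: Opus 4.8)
The plan is to obtain this equivalence by splicing together the abstract minimax Theorem \ref{min-max} with the characterisation of the intersection property via $\varepsilon$-subgradients given in Theorem \ref{th-epsilon} (and Proposition \ref{prop_subdiff}). The hypotheses of Theorem \ref{min-max} are exactly those assumed here: for every $y\in Y$ the section $a(\cdot,y)$ is proper $\Phi$-convex, and for every $x\in X$ the section $a(x,\cdot)$ is concave. So the whole task reduces to translating between condition $(i)$ of Theorem \ref{min-max}, phrased with support functions, and condition $(i)$ of the present theorem, phrased with $\varepsilon$-subgradients.

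For the implication $(i)\Rightarrow(ii)$, I would fix an arbitrary $\alpha<\inf_{x\in X}\sup_{y\in Y}a(x,y)$ and apply the hypothesis with one convenient value, say $\varepsilon=1$. This produces $y_{1},y_{2}\in Y$, points $x_{1},x_{2}$, and $\varphi_{1}\in\partial_{\Phi}^{\varepsilon}a(\cdot,y_{1})(x_{1})$, $\varphi_{2}\in\partial_{\Phi}^{\varepsilon}a(\cdot,y_{2})(x_{2})$ enjoying the intersection property on $X$ at the level $\alpha$. By Proposition \ref{prop_subdiff} every $\varepsilon$-subgradient lies in the corresponding support set, so $\varphi_{1}\in\textnormal{supp}\,a(\cdot,y_{1})$ and $\varphi_{2}\in\textnormal{supp}\,a(\cdot,y_{2})$. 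Hence condition $(i)$ of Theorem \ref{min-max} is verified for this $\alpha$, and since $\alpha$ was arbitrary, Theorem \ref{min-max} yields the minimax equality $(ii)$.

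For $(ii)\Rightarrow(i)$, I would start from the minimax equality and invoke the implication $(ii)\Rightarrow(i)$ of Theorem \ref{min-max}: for every $\alpha<\inf_{x\in X}\sup_{y\in Y}a(x,y)$ there are $y_{1},y_{2}\in Y$ and $\varphi_{1}\in\textnormal{supp}\,a(\cdot,y_{1})$, $\varphi_{2}\in\textnormal{supp}\,a(\cdot,y_{2})$ with the intersection property on $X$ at the level $\alpha$. Now fixing such $\alpha$ together with an arbitrary $\varepsilon>0$, I would apply Theorem \ref{th-epsilon} to $f=a(\cdot,y_{1})$ and $g=a(\cdot,y_{2})$: its condition $(i)$ is precisely what was just produced, so its condition $(ii)$ supplies $x_{1}\in\textnormal{dom}(a(\cdot,y_{1}))$, $x_{2}\in\textnormal{dom}(a(\cdot,y_{2}))$ and $\bar{\varphi}_{1}\in\partial_{\Phi}^{\varepsilon}a(\cdot,y_{1})(x_{1})$, $\bar{\varphi}_{2}\in\partial_{\Phi}^{\varepsilon}a(\cdot,y_{2})(x_{2})$ whose intersection property holds at the level $\alpha$. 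This is exactly condition $(i)$ here.

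The argument is essentially bookkeeping and presents no genuine obstacle; the only point needing care is the order of quantifiers. In $(i)\Rightarrow(ii)$ one must observe that condition $(i)$ is assumed for \emph{every} $\varepsilon>0$, so selecting a single convenient $\varepsilon$ is legitimate, and that $y_{1},y_{2}$ are permitted to depend on both $\alpha$ and $\varepsilon$. In $(ii)\Rightarrow(i)$ the indices $y_{1},y_{2}$ furnished by Theorem \ref{min-max} depend only on $\alpha$, which is a fortiori compatible with the looser dependence allowed in the conclusion.
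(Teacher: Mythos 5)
Your proposal is correct and follows exactly the route the paper intends: the paper's own proof consists of the single line that the result ``follows immediately from Theorem \ref{th-epsilon} and Theorem \ref{min-max}'', and your argument is precisely the bookkeeping that makes this explicit, including the correct use of Proposition \ref{prop_subdiff} to pass from $\varepsilon$-subgradients back to support functions and the careful handling of the quantifier over $\varepsilon$.
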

		\begin{proof}
			Follows immediately from  Theorem \ref{th-epsilon} and Theorem \ref{min-max}.
		\end{proof}

\end{document}